\documentclass[11pt]{article}
\usepackage[utf8]{inputenc}
\usepackage{authblk}
\usepackage{mathtools}
\usepackage[english]{babel}
\usepackage{amsmath}
\usepackage{amsthm}
\usepackage{amsfonts}
\usepackage{amssymb}
\usepackage{bbm}
\usepackage{subcaption}
\usepackage{verbatim}
\usepackage[usenames,dvipsnames]{color}
\usepackage{hyperref}
\usepackage[left=2cm,right=2cm,top=2cm,bottom=2cm]{geometry}
\usepackage{dsfont}
\usepackage{enumitem}
\usepackage{tikz}
\usepackage[numeric,initials,nobysame,msc-links,abbrev]{amsrefs}
\renewcommand{\eprint}[1]{\href{https://arxiv.org/abs/#1}{arXiv:#1}}
\newcommand{\pageafter}[1]{#1~pp.}
\BibSpec{article}{%
+{} {\PrintAuthors} {author}
+{,} { \textit} {title}
+{.} { } {part}
+{:} { \textit} {subtitle}
+{,} { \PrintContributions} {contribution}
+{.} { \PrintPartials} {partial}
+{,} { } {journal}
+{} { \textbf} {volume}
+{} { \PrintDatePV} {date}
+{,} { \issuetext} {number}
+{,} { \pageafter} {pages}
+{,} { } {status}
+{,} { \PrintDOI} {doi}
+{,} { available at \eprint} {eprint}
+{} { \parenthesize} {language}
+{} { \PrintTranslation} {translation}
+{;} { \PrintReprint} {reprint}
+{.} { } {note}
+{.} {} {transition}
+{} {\SentenceSpace \PrintReviews} {review}
}
\BibSpec{collection.article}{%
+{} {\PrintAuthors} {author}
+{,} { \textit} {title}
+{.} { } {part}
+{:} { \textit} {subtitle}
+{,} { \PrintContributions} {contribution}
+{,} { \PrintConference} {conference}
+{} {\PrintBook} {book}
+{,} { } {booktitle}
+{,} { \PrintDateB} {date}
+{,} { \pageafter} {pages}
+{,} { } {status}
+{,} { \PrintDOI} {doi}
+{,} { available at \eprint} {eprint}
+{} { \parenthesize} {language}
+{} { \PrintTranslation} {translation}
+{;} { \PrintReprint} {reprint}
+{.} { } {note}
+{.} {} {transition}
+{} {\SentenceSpace \PrintReviews} {review}
}

\usetikzlibrary{arrows}
\usepackage[normalem]{ulem}

\newtheorem{theorem}{Theorem}

\newtheorem{lemma}[theorem]{Lemma}

\newtheorem{conjecture}[theorem]{Conjecture}
\newtheorem{question}[theorem]{Question}
\theoremstyle{definition}

\newtheorem{remark}[theorem]{Remark}
\newtheorem{example}[theorem]{Example}

\newcommand{\bbQ}{\mathbb{Q}}

\newcommand{\bbT}{\mathbb{T}}

\newcommand{\bbZ}{\mathbb{Z}}

\newcommand{\cD}{\mathcal{D}}

\newcommand{\bn}{\mathbf{n}}

\newcommand{\bu}{\mathbf{u}}
\newcommand{\bv}{\mathbf{v}}
\newcommand{\bw}{\mathbf{w}}
\newcommand{\bx}{\mathbf{x}}
\newcommand{\by}{\mathbf{y}}
\newcommand{\bz}{\mathbf{z}}

\title{\scshape Local dimer dynamics in higher dimensions}

\author[1]{Ivailo Hartarsky}
\author[2]{Lyuben Lichev}
\author[1]{Fabio Toninelli}
    \affil[1]{TU Wien, Faculty of Mathematics and Geoinformation,
    Wiedner Hauptstra\ss e 8-10, A-1040 Vienna, Austria, \texttt{\{ivailo.hartarsky,fabio.toninelli\}@tuwien.ac.at}}
\affil[2]{Institut Camille Jordan, Univ. Jean Monnet, Saint-Etienne, France, \texttt{lyuben.lichev@univ-st-etienne.fr}}

\begin{document}

\maketitle

\begin{abstract}
We consider local dynamics of the dimer model (perfect matchings) on hypercubic boxes $[n]^d$. These consist of successively switching the dimers along alternating cycles of prescribed (small) lengths. We study the connectivity properties of the dimer configuration space equipped with these transitions. Answering a question of Freire, Klivans, Milet and Saldanha, we show that in three dimensions any configuration admits an alternating cycle of length at most 6. We further establish that any configuration on $[n]^d$ features order $n^{d-2}$ alternating cycles of length at most $4d-2$. We also prove that the dynamics of dimer configurations on the unit hypercube of dimension $d$ is ergodic when switching alternating cycles of length at most $4d-4$. Finally, in the planar but non-bipartite case, we show that parallelogram-shaped boxes in the triangular lattice are ergodic for switching alternating cycles of lengths 4 and 6 only, thus improving a result of Kenyon and R\'emila, which also uses 8-cycles. None of our proofs make reference to height functions.
\end{abstract}

\noindent\textbf{MSC2020:} 05B50; 05C70; 82C20
\\
\textbf{Keywords:} dimers, dominoes, local dynamics, ergodicity

\section{Introduction}
The dimer model on planar graphs has played a crucial role in statistical
mechanics and probability theory for several reasons: in particular, its integrability
properties related to Kasteleyn's determinantal or Pfaffian solution
and, in
the bipartite case, the emergence of macroscopic shapes, arctic curves,
conformal invariance and Gaussian Free Field height fluctuations at large
scales  (see the monographs \cites{Kenyon09,Gorin21} and references therein). The behaviour of the dimer model in dimension higher than 2, or on
planar but non-bipartite graphs, is much less understood, and the same can be
said about the model's  Glauber dynamics. The goal of
the present work is to present new results about (local) dimer Glauber
dynamics, either on $\bbZ^d$ for $d \ge 3$ or on the planar
triangular lattice.

The study of Glauber dynamics of the dimer model has a long history and has
proved quite challenging. While it is easy to define local Markov
dynamics with update rule consisting in switching alternating cycles, which ensures that
the uniform measure is stationary and reversible, proving that such processes
are ergodic and quantifying their speed of convergence to equilibrium is
a much more subtle business. In the \emph{planar bipartite case}, the height
function~\cite{Thurston90} turns out to be extremely helpful: it provides a natural partial order preserved by the dynamics, an easy proof of ergodicity and an intuitive ``mean curvature motion'' heuristic suggesting that,
in many interesting situations, the mixing time $T_{\rm {mix}}$ is of order
$L^2$ (in continuous time), with $L$ the diameter of the domain. Under some conditions, the
Glauber dynamics have in fact been proven to be fast mixing~\cites{Luby01,Randall00,Wilson04} and even to satisfy
$T_{\rm {mix}} = L^{2 + o (1)}$ under suitable restrictions on the domain
geometry~\cites{Laslier15,Caputo12,Laslier23}.

As soon as the model is either not planar or not bipartite, there is no canonical definition of height function and  the most basic question of proving that local Glauber dynamics are ergodic, and even that they have no completely blocked configurations, turn out to be non-trivial. The situation is particularly unclear for the dimer model on (say, cubic subsets of)  $\mathbb Z^d$ for $d\ge 3$ where there are no local dynamics that are known to be ergodic. In fact, the simplest chain whose updates consist in flipping two parallel dimers fails to be ergodic because of subtle topological obstructions. We refer to Section~\ref{sec:background} (as well as to~\cite{Chandgotia23}*{Sections 1, 3 and 9} and~\cite{Milet15}) for a more extensive discussion of conjectures, open problems and previous partial results, and to Section~\ref{subsec:results} for a precise statement of our own results. Let us only briefly anticipate here that our main results include the proof that for local Glauber dynamics on cubic boxes of $\mathbb Z^d$, allowing \emph{switching} (also called moves in~\cite{Milet15} and loop shifts in~\cite{Chandgotia23}) along cycles of finite length (suitably depending on the dimension $d$ only), all connected components of the state space are at least of size $e^{c(d) n^{d-2}}$, with $n$ the side length of the cube. For comaprison, it was previously an open question to prove that there are no components of cardinality $1$. Let us add that the ergodicity question, besides being crucial for the use of Markov chains as simulation algorithms, has also attracted interest in the theoretical physics community due to its connection to the quantum dimer model and to the possible occurrence of ``Hilbert space fragmentation''~\cite{Roising23}. 

Finally, we emphasize that substantial progress in the understanding of the \emph{equilibrium} properties of (uniform) dimer configurations in dimension $d \ge 3$ has been made recently. This
includes a large deviation principle for the ``flow function'' of
three-dimensional dimers~\cite{Chandgotia23}, an approximate enumeration algorithm 
\cite{Kenyon96a} and the proof of occurrence of macroscopic loops for the $d \ge 3$ double dimer model~\cite{Quitmann23}. See also~\cites{Lammers21,Randall00a,Linde01} for different generalisations of the dimer model to dimension $d \ge 3$.

\subsection{Model}
Given a graph $G$, a \emph{dimer configuration} on $G$ is a perfect matching, that is, a set of edges such that every vertex in $G$ is incident to exactly one of them. The edges in a dimer configuration are called \emph{dimers}. 
Fix a graph $G$ and a dimer configuration on $G$. An \emph{alternating cycle} is a cycle in $G$ of even length where every second edge is a dimer. A \emph{switching} of an alternating cycle is the operation of exchanging the dimer and the non-dimer edges along the cycle. Note that any switching in a dimer configuration produces another dimer configuration, see e.g.\ Figure~\ref{fig:8cyc}.

For a graph $G$, we denote by $\cD(G)$ the graph with vertices given by the dimer configurations on $G$, where two vertices are connected if there is an alternating cycle whose switching transforms one of the dimer configurations into the other. Moreover, for an integer $\ell\ge 2$, we denote by $\cD_{\ell}(G)$ the spanning subgraph of $\cD(G)$ where edges correspond to alternating cycles of length at most $2\ell$. We also say that the space of dimer configurations is \emph{$2\ell$-ergodic} (or simply \emph{ergodic}) if the graph $\cD_{\ell}(G)$ is connected. Note that the superposition of two dimer configurations forms a set of alternating cycles and double edges, so for any finite graph $G$ and $\ell$ large enough, $\cD_\ell(G)$ is necessarily ergodic.

Given a positive integer $n$, we denote by $[n]$ the set $\{1,\dots,n\}$. Given a positive integer $d\ge 1$, we refer to any vector $\bn=(n_1,\dots,n_d)$ such that $n_1,\dots,n_d\ge2$ and {the product} $n_1\cdots n_d$ is even as \emph{shape}. For a shape $\bn$, the \emph{$\bn$-box} $\bbQ^d_\bn$ is defined as follows. The graph $\bbQ_\bn^d$ has vertex set $\prod_{i=1}^d[n_i]$ and edges between $\bu = (u_1,\ldots,u_d)$ and $\bv = (v_1,\ldots,v_d)$ if there exists $i\in[d]$ such that $u_j=v_j$ for all $j\neq i$ and $|u_i-v_i|=1$. We write $\bbQ_n^d$ for $\bbQ_{\bn}^d$ with $\bn=(n,\dots,n)\in\bbZ^d$ and $\bbQ^d$ for the unit hypercube $\bbQ_2^d$. For simplicity, we often identify boxes $\bbQ_\bn^d$ with their natural embedding in the $d$-dimensional Euclidean space.

We further define the triangular lattice $\bbT$ as the graph with vertex set $\mathbb Z^2$ where every vertex $\bv$ is adjacent to $\bv+(1,0), \bv+(-1,0), \bv+(0,1), \bv+(0,-1), \bv+(1,-1), \bv+(-1,1)$. For positive integers $m,n$ with $mn$ even, we denote by $\bbT_{m,n}$ the graph induced from $\bbT$ by the vertex set $[m]\times [n]$. Note that, while $\bbT_{m,n}$ is a (rectangular) box in the embedding of $\bbT$ chosen above, in the more standard isoradial embedding, these domains correspond to parallelograms.

We set out to study the ergodicity of $\cD_\ell(\bbQ_\bn^d)$ and $\cD_{\ell}(\bbT_{m,n})$.

\subsection{Background}
\label{sec:background}
Given a planar graph drawn in the plane, a \emph{domain} is a union of faces (seen as closed polygons in the plane) of some fixed lattice. Of course, any domain may be seen as a portion of the lattice with its proper dimer configurations and cycle-switching dynamics. The ergodicity of simply connected domains in planar bipartite lattices (with cycle length given by the largest number of edges on the boundary of an inner face) is a classical fact and follows directly by considering the associated height function (see~\cites{Saldanha95,Thurston90}). Recently, the ergodicity of local dynamics on a number of planar lattices was studied by R{\o}ising and Zhang~\cite{Roising23}, extending an approach of  Kenyon and  R\'emila~\cite{Kenyon96}. We note that the techniques used there do not rely on height functions but use planarity in a substantial way, and also involve a certain amount of manual verification.

The main goal of our work is to go beyond the planar case. The most natural setting in this respect corresponds to studying $\cD_\ell(\bbQ_\bn^3)$ for fixed $\ell$ and large 3-dimensional shapes $\bn$. It is not hard to check that $\cD_2(\bbQ_\bn^3)$ is not connected and even has isolated vertices e.g.\ for $\bn=(3,3,2)$ (and similarly for $\bn=(n_1,n_2,n_3)$ with $n_1$ and $n_2$ divisible by 3 and $n_3$ even), see Figure~\ref{fig:332}. This suggests the existence of an invariant preserved by switching $4$-cycles. One such invariant was noted in~\cites{Freedman11} (also see \cite{Bednik19}), and a much more informative one called the \emph{twist} was introduced in~\cite{Milet15} and further studied in~\cites{Milet15a,Milet14}, thus establishing interesting algebraic, topological and geometric connections alongside the combinatorial ones.

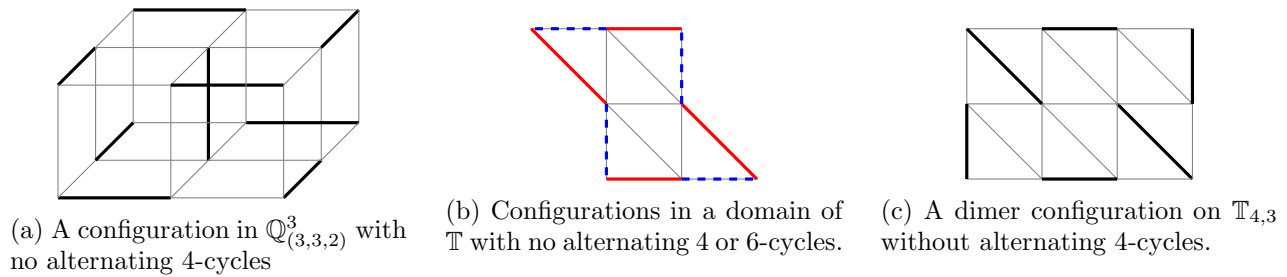
\begin{figure}
\centering
\begin{subfigure}{0.3\textwidth}
\centering
\begin{tikzpicture}[x=0.5cm,y=0.5cm]
\draw [very thick] (-3,-1)-- (0,-1);
\draw [gray] (0,-1)-- (3,-1);
\draw [very thick] (3,-1)-- (4,0);
\draw [gray] (0,-1)-- (1,0);
\draw [gray] (-3,-1)-- (-2,0);
\draw [gray] (-2,0)-- (1,0);
\draw [gray] (1,0)-- (4,0);
\draw [gray] (-3,-1)-- (-3,2);
\draw [very thick] (-3,2)-- (-2,3);
\draw [gray] (-2,3)-- (1,3);
\draw [very thick] (1,3)-- (1,0);
\draw [gray] (-3,2)-- (0,2);
\draw [gray] (0,2)-- (1,3);
\draw [gray] (0,2)-- (0,-1);
\draw [gray] (-2,3)-- (-2,0);
\draw [gray] (1,3)-- (4,3);
\draw [gray] (4,3)-- (4,0);
\draw [very thick] (0,2)-- (3,2);
\draw [gray] (3,2)-- (4,3);
\draw [gray] (3,2)-- (3,-1);
\draw [very thick] (4,3)-- (5,4);
\draw [gray] (1,3)-- (2,4);
\draw [very thick] (-1,4)-- (2,4);
\draw [gray] (2,4)-- (5,4);
\draw [gray] (5,4)-- (5,1);
\draw [very thick] (5,1)-- (2,1);
\draw [gray] (2,1)-- (-1,1);
\draw [gray] (-1,1)-- (-1,4);
\draw [gray] (-1,4)-- (-2,3);
\draw [very thick] (-2,0)-- (-1,1);
\draw [gray] (1,0)-- (2,1);
\draw [gray] (2,1)-- (2,4);
\draw [gray] (4,0)-- (5,1);
\end{tikzpicture}
\caption{A configuration in $\bbQ^{3}_{(3,3,2)}$ with no alternating 4-cycles}
\label{fig:332}
\end{subfigure}
\quad
\begin{subfigure}{0.3\textwidth}
\centering
\begin{tikzpicture}[x=1cm,y=1cm]
        \draw[gray,very thin] (0,1)--(1,0)--(1,1)--(0,1)--(0,2)--(1,1);
        \draw[very thick,blue] (0,0)--(1,0);
        \draw[very thick,blue] (2,0)--(1,1);
        \draw[very thick,blue] (1,2)--(0,2);
        \draw[very thick,blue] (-1,2)--(0,1);
        \draw[very thick,red,dashed] (1,0)--(2,0);
        \draw[very thick,red,dashed] (1,1)--(1,2);
        \draw[very thick,red,dashed] (0,2)--(-1,2);
        \draw[very thick,red,dashed] (0,1)--(0,0);
    \end{tikzpicture}
    \caption{Configurations in a domain of $\bbT$ with no alternating $4$ or $6$-cycles.\label{fig:8cyc}}
\end{subfigure}
\quad
\begin{subfigure}{0.3\textwidth}
    \centering
\begin{tikzpicture}[x=1cm,y=1cm]
\draw [very thick] (5,1)-- (6,0);
\draw [gray] (5,1)-- (5,0);
\draw [gray] (5,1)-- (5,2);

\draw [gray] (4,1)-- (4,0);
\draw [gray] (4,1)-- (4,2);

\draw [gray] (6,1)-- (6,0);
\draw [gray] (3,1)-- (3,2);

\draw [gray] (5,0)-- (6,0);
\draw [gray] (3,0)-- (4,0);

\draw [gray] (5,1)-- (6,1);
\draw [gray] (3,1)-- (4,1);

\draw [gray] (5,2)-- (6,2);
\draw [gray] (3,2)-- (4,2);

\draw [gray] (4,1)-- (5,1);
\draw [gray] (5,0)-- (4,1);
\draw [gray] (5,1)-- (4,2);

\draw [gray] (6,1)-- (5,2);
\draw [gray] (4,0)-- (3,1);

\draw [very thick] (6,1)-- (6,2);
\draw [very thick] (5,2)-- (4,2);
\draw [very thick] (3,2)-- (4,1);
\draw [very thick] (5,0)-- (4,0);
\draw [very thick] (3,1)-- (3,0);
\end{tikzpicture}
\caption{A dimer configuration on $\bbT_{4,3}$ without alternating 4-cycles.}
\label{fig:no4cycs}
\end{subfigure}
\caption{Example configurations with no short alternating cycles.}
\end{figure}

This suggests considering $\cD_\ell(\bbQ_\bn^3)$ for $\ell\ge3$. Milet and Saldanha~\cite{Milet15a} asked whether $\cD_3(\bbQ_\bn^3)$ is connected for $3$-dimensional shapes $\bn$. This was reiterated by Freire, Klivans, Milet and Saldanha in~\cites{Freire22,Saldanha21} and very recently by Chandgotia, Sheffield and Wolfram \cite{Chandgotia23}*{Problem 9.1}. We promote it to the following conjecture, which is one of the main motivations behind our work.
\begin{conjecture}
\label{conj:ergodicity}
For all $3$-dimensional shapes $\bn$, the graph $\cD_3(\bbQ_\bn^3)$ is connected.
\end{conjecture}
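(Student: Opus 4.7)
The plan is to proceed by induction on a monovariant measuring the distance from $M$ to a canonical reference configuration $M_0$ on $\bbQ_\bn^3$, chosen to be a brick-wall pattern compatible with the parities of $\bn$. I would take the primary monovariant to be $\Phi_1(M)=|M\triangle M_0|$ and combine it lexicographically with $|T(M)-T(M_0)|$, where $T$ denotes the \emph{twist} of Milet and Saldanha~\cite{Milet15}, the only known nontrivial invariant of the $\cD_2$-equivalence class. The target is to show that any $M\neq M_0$ admits an alternating cycle of length at most $6$ whose switching strictly decreases this lexicographic pair; the existence of \emph{some} such short cycle is granted by the short-cycle theorem established earlier in the paper (the one announced in the abstract).

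The argument naturally splits into two steps. \emph{Twist alignment:} if $T(M)\neq T(M_0)$, show that among the short alternating cycles produced by the existence theorem, at least one is a $6$-cycle whose switch moves $T$ strictly toward $T(M_0)$; since a single $6$-cycle switch changes $T$ by $\pm 1$, this is enough to align the twist in finitely many steps. \emph{Twist-preserving reduction:} once $T(M)=T(M_0)$, one reduces $\Phi_1(M)$ to zero using only twist-preserving $4$- and $6$-cycle switches, typically by peeling off an outermost cycle of $M\triangle M_0$. This second step is the three-dimensional analogue of classical planar bipartite ergodicity, but $6$-cycles are genuinely needed even inside a single twist class since, as Figure~\ref{fig:332} shows, twist-trivial configurations can fail to contain any alternating $4$-cycle at all.

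The main obstacle, I expect, lies in the twist-preserving reduction. A naive peeling is unlikely to work: switching a short alternating cycle of $M$ may create new components of $M\triangle M_0$ and hence fail to be monotone in $\Phi_1$. To circumvent this, I would aim for a local elimination lemma of the form: inside any small sub-box (say $3\times 3\times 3$) where $M$ and $M_0$ disagree nontrivially, the discrepancy can be removed by a bounded number of $6$-cycle switches affecting only a slightly larger region and preserving the twist. Proving such a lemma would require a careful case analysis built on the structural results for twist from~\cites{Milet15,Milet15a}, and in my view this local-to-global passage is the crux of the conjecture. Importantly, since the paper deliberately eschews height functions, the reduction must rely on genuinely three-dimensional combinatorics rather than on any slice-by-slice planar argument; this is what makes the conjecture qualitatively harder than its two-dimensional counterpart.
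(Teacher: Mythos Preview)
The statement you are attempting to prove is labelled \emph{Conjecture}~\ref{conj:ergodicity} in the paper and is explicitly left open there; the paper does not contain a proof of it. The authors only establish partial results in its direction (Theorems~\ref{th:counting} and~\ref{th:degree:size}), such as the absence of isolated vertices in $\cD_3(\bbQ_\bn^3)$ and a lower bound on connected component sizes. So there is no ``paper's own proof'' to compare against.

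As for your proposal itself, it is a research outline rather than a proof, and you have in fact correctly identified the two places where it fails to close. First, in the \emph{twist alignment} step you assert that among the short alternating cycles guaranteed by Theorem~\ref{th:counting} there is one whose switch moves $T$ strictly toward $T(M_0)$; nothing in the paper or in~\cites{Milet15,Milet15a} implies this, and a priori all available short cycles could move the twist the wrong way or be $4$-cycles (which leave $T$ unchanged). Second, the \emph{twist-preserving reduction} hinges on a local elimination lemma you state but do not prove; this is precisely the unresolved core of the conjecture, and the known partial results (e.g.\ ergodicity only for slabs $\bbQ^3_{(n_1,n_2,2)}$~\cite{Milet15}, or only after refinement~\cite{Freire22}, or only for tall boxes with a vertical buffer~\cite{Saldanha22}) indicate that no such uniform local reduction is currently known. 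Your monovariant $\Phi_1(M)=|M\triangle M_0|$ is also not obviously compatible with short switches: a single $6$-cycle switch can increase $|M\triangle M_0|$, so the lexicographic descent is not guaranteed without the very lemma you are missing.
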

Several weaker results in the direction of Conjecture~\ref{conj:ergodicity} have been obtained. Firstly, for $\bn=(n_1,n_2,2)$, $6$-ergodicity was established in~\cite{Milet15}. In~\cite{Freire22}, Conjecture~\ref{conj:ergodicity} was proved up to refinement, that is, repeatedly replacing each dimer in the configuration by a copy of a dimer configuration on $\bbQ_{(5,5,10)}$, $\bbQ_{(5,10,5)}$ or $\bbQ_{(10,5,5)}$ with dimers parallel to the original one. In~\cite{Saldanha22}, Conjecture~\ref{conj:ergodicity} was proved for $\bn=(n,m,N)$ with $N$ large enough (depending on $n$ and $m$) and restricting attention to dimer configurations whose last sufficiently many layers (again, depending on $n$ and $m$) are filled with vertical dimers. 

In view of the above, the following question weakening Conjecture~\ref{conj:ergodicity} was asked in~\cite{Freire22}{, where it was checked that no small counterexamples exist}.
\begin{question}
\label{ques:isolated}
Do there exist even $n$ such that $\cD_3(\bbQ_n^3)$ has isolated vertices?
\end{question}

Higher dimensions have been explored even less. Indeed, we are only aware of a binary invariant for $4$-cycle switchings considered in~\cite{Klivans22}, where results similar to those from~\cite{Saldanha22} were proved.

\subsection{Results}
\label{subsec:results}
In the present work, we prove several results on the connectivity properties of the graphs $\cD_\ell(\bbQ^d_\bn)$ and $\cD_\ell(\bbT_{m,n})$. Contrary to previous approaches that mainly used the algebraic, topological and geometric aspects of the Milet--Saldanha twist invariant, our arguments are purely combinatorial and elementary. 

Our first result immediately entails a negative answer to Question~\ref{ques:isolated}.
\begin{theorem}[Extraction of a dense $\bbQ^d$]
\label{th:counting}
Let $d\ge 2$ and $\bn$ be a $d$-dimensional shape. Then, for any dimer configuration $D$ on $\bbQ_\bn^d$, there exists $\bx\in\bbZ^d$ such that the unit cube $\bx+\bbQ^d\subseteq\bbQ_\bn^d$ contains at least $2^{d-2}+1$ dimers in $D$.
\end{theorem}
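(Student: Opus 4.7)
The plan is to prove the theorem by a double-counting argument applied to incidences between unit cubes and dimers of $D$. Write $C:=\prod_{i=1}^d(n_i-1)$ for the number of unit cubes of $\bbQ_\bn^d$ and set $T:=\sum_Q\lvert D\cap E(Q)\rvert$, where $Q$ ranges over the unit cubes of $\bbQ_\bn^d$ and $E(Q)$ is its edge set. Since both $T$ and $2^{d-2}C$ are integers (as $d\ge 2$), it suffices, by the pigeonhole principle, to establish the strict inequality $T>2^{d-2}C$: this would force some unit cube to contain strictly more than $2^{d-2}$, hence at least $2^{d-2}+1$, dimers.

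I would prove the weak inequality $T\ge 2^{d-2}C$ via a vertex potential. For $\bv\in\prod_i[n_i]$, define $f(\bv):=\prod_{i=1}^d w_i(v_i)$ with $w_i(v_i)=1$ if $v_i\in\{1,n_i\}$ and $w_i(v_i)=2$ otherwise; then $f(\bv)$ is exactly the number of unit cubes of $\bbQ_\bn^d$ containing $\bv$. A direct check shows that the number of cubes containing an edge $\{\bu,\bu+\be_i\}$ equals $W_i(\bu):=\prod_{j\ne i}w_j(u_j)$. Setting $y_\bv:=f(\bv)/4$, one computes
\[
y_\bu+y_{\bu+\be_i}=W_i(\bu)\cdot\frac{w_i(u_i)+w_i(u_i+1)}{4}\le W_i(\bu),
\]
since $w_i$ takes values in $\{1,2\}$. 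Summing this inequality over the dimers of $D$, and using that every vertex of $\bbQ_\bn^d$ is covered exactly once, gives
\[
T\ge\sum_{\bv}y_\bv=\frac{1}{4}\sum_{\bv}f(\bv)=\frac{2^d C}{4}=2^{d-2}C,
\]
where the next-to-last equality follows by double counting vertex-cube incidences.

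The main obstacle is to upgrade this to a strict inequality, and I expect this to follow by carefully accounting for the slack at the $2^d$ \emph{corners} of $\bbQ_\bn^d$ (vertices with every coordinate in $\{1,n_i\}$). The slack contributed by a dimer $\{\bu,\bu+\be_i\}$ of $D$ is $W_i(\bu)(4-w_i(u_i)-w_i(u_i+1))/4$, which is positive whenever one of its endpoints lies on a boundary face $\{v_i=1\}$ or $\{v_i=n_i\}$. Analyzing separately the dimers of $D$ containing $0$, $1$, or $2$ corners (the case of two corners forcing $n_i=2$ along the dimer's direction), one checks that a dimer covering $k$ corners of $\bbQ_\bn^d$ contributes slack at least $k/4$: indeed, all non-corner endpoints of the dimer have $W_i(\bu)=1$, and the arithmetic of $w_i(u_i)+w_i(u_i+1)$ forces the bound. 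Since each of the $2^d$ corners is covered by exactly one dimer of $D$, the total slack is at least $2^d/4=2^{d-2}\ge 1$, so $T\ge 2^{d-2}C+1>2^{d-2}C$, and the theorem follows.
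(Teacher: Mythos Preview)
Your argument is correct and is essentially the same double-counting proof as the paper's: your vertex potential $y_\bv=f(\bv)/4=2^{d-2-|I(\bv)|}$ is exactly half of the paper's per-vertex lower bound $2^{d-1-|I(\bv)|}$ on the number of unit cubes containing the dimer at $\bv$, and your ``slack at corners'' corresponds precisely to the paper's separate treatment of corner vertices, both yielding $T\ge 2^{d-2}(C+1)$. One small wording issue: the sentence ``all non-corner endpoints of the dimer have $W_i(\bu)=1$'' is only true (and only needed) for dimers that already have a corner endpoint, since $W_i$ depends only on the shared off-axis coordinates---you might phrase it as ``if the dimer has a corner endpoint then $W_i=1$.''
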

Indeed, when $d=3$, this yields a unit cube with $3$ dimers, which is readily checked to contain an alternating cycle of length $4$ or $6$. One can similarly check that in 4 dimensions, we obtain an alternating cycle of length at most $8$. We believe that any set of $2^{d-2}+1$ disjoint dimers in $\bbQ^d$ admits an alternating cycle of length at most $2d$ for any $d\ge 2$ but have been unable to prove this. Let us emphasise that it is crucial that $\bx+\bbQ^d$ contains $2^{d-2}+1$ dimers and not less: in fact, one can check that there exist various very different examples of configurations on $\bbQ^d$ with $2^{d-2}$ dimers containing no alternating cycle of any length.

Theorem~\ref{th:counting} proves the absence of isolated vertices in $\cD_3(\bbQ_\bn^3)$ for all $3$-dimensional shapes $\bn$. At the cost of increasing the value of $\ell$, we are able to prove much more.
\begin{theorem}[Degree and component size]
\label{th:degree:size}
Fix $d\ge 3$ and an even positive integer $n$. The minimum degree of $\cD_{2d-1}(\bbQ_n^d)$ is at least $n^{d-2}/(320d^6)$ and each connected component contains at least $2^{n^{d-2}/(320d^6)}$ dimer configurations. 
\end{theorem}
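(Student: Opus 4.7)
The plan is to produce, for any dimer configuration $D$ on $\bbQ_n^d$, a collection of $K := \lceil n^{d-2}/(320 d^6)\rceil$ pairwise vertex-disjoint alternating cycles in $D$, each of length at most $4d-2$. Both conclusions of the theorem follow at once. Distinct alternating cycles of length at most $4d-2$ yield distinct neighbours of $D$ in $\cD_{2d-1}(\bbQ_n^d)$, so the minimum degree is at least $K$. And since the cycles are vertex-disjoint, any subset of them can be switched in any order without interfering, producing $2^K$ distinct configurations in the connected component containing $D$.

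To build the $K$ disjoint cycles, I would partition $\bbQ_n^d$ into $(n/m)^{d-2}$ disjoint sub-boxes of shape $(n,n,m,\ldots,m)$, where $m=m(d)$ is a constant chosen so that $m^{d-2}$ is of order $d^6$; the partition thus contains well over $K$ sub-boxes. Each sub-box is itself a $d$-dimensional box to which Theorem~\ref{th:counting} can in principle be applied. Indeed, whenever no dimer of $D$ has exactly one endpoint in a sub-box $B$ (equivalently, $D|_B$ is a perfect matching of $B$), Theorem~\ref{th:counting} produces a unit cube inside $B$ containing at least $2^{d-2}+1$ dimers of $D$, which must contain an alternating cycle of length at most $4d-2$ supported in $B$. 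Cycles obtained from distinct sub-boxes are automatically vertex-disjoint.

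The main obstacle I anticipate is ensuring that enough sub-boxes have this perfect-restriction property, since a priori many sub-boxes will contain some dimer of $D$ with one endpoint outside. To handle this I would first average over the $m^{d-2}$ possible translates of the partition: every direction-$i\geq 3$ dimer of $D$ crosses a partition boundary under exactly a $1/m$-fraction of the shifts, so for some shift the number of boundary-crossing dimers is at most $O(n^d/m)$, bounding the number of "bad" sub-boxes. The hard part, where the constant $320 d^6$ is pinned down, is then a local analysis of the bad sub-boxes: either one proves a robust refinement of Theorem~\ref{th:counting} producing a heavy unit cube in $B$ under the hypothesis that $D|_B$ covers all but a small controlled fraction of $B$'s vertices; or one completes $D|_B$ inside $B$ with an auxiliary matching of the uncovered vertices and argues that the heavy cube produced by Theorem~\ref{th:counting} for the completion still contains enough genuine dimers of $D$ to form a short alternating cycle. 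Either route requires a careful balance of $m$ against the total boundary loss.
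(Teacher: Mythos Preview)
Your proposal has a genuine gap at its core, and in fact at the very step you treat as routine. You assert that a unit cube containing $2^{d-2}+1$ dimers of $D$ ``must contain an alternating cycle of length at most $4d-2$''. The paper explicitly records (immediately after Theorem~\ref{th:counting}) that the authors \emph{have been unable to prove} that $2^{d-2}+1$ dimers in $\bbQ^d$ force even an alternating cycle of length at most $2d$; the weaker bound $4d-2$ is not known either. It is easy to see why this is delicate: $2^{d-2}$ dimers can form an induced matching in $\bbQ^d$ with no alternating cycle of any length, so one is asking what a single extra dimer buys. With $2^{d-2}+1$ dimers one can only conclude that two dimers are at edge-distance~1, which yields an alternating path of length~3 but not, in general, a $4$-cycle (the dimers need not be parallel). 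Extending such a path into a short alternating cycle inside a partial matching covering barely more than half the vertices is exactly the missing combinatorial lemma.

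Even setting this aside, the quantitative scaffolding does not close. First, you want $m$ with $m^{d-2}$ of order $d^6$, but already $m=2$ gives $2^{d-2}\gg d^6$ for large $d$, so no integer $m\ge 2$ works; with $m=2$ you would get only $n^{d-2}/2^{d-2}$ sub-boxes, far short of $n^{d-2}/(320d^6)$. Second, your averaging bounds the number of boundary-crossing \emph{dimers} by $O(n^d/m)$, but each sub-box has volume $n^2 m^{d-2}$, so a single bad box can absorb many crossings; conversely, with only $(n/m)^{d-2}$ sub-boxes in total and potentially $n^d/m$ crossings, nothing prevents every box from being bad.

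The paper's route is entirely different and avoids Theorem~\ref{th:counting} altogether. It introduces the notion of an \emph{authorised} vertex (one not collinear with an adjacent dimer) and proves two lemmas: (i) a short local argument showing that the second neighbourhood of any authorised vertex contains an alternating cycle of length at most $4d-2$; and (ii) a global counting argument, analysing the density of dimers between consecutive hyperplanes in a well-chosen direction, showing that any configuration has at least $n^{d-2}/(20d^2)$ authorised vertices. One then greedily selects authorised vertices at pairwise distance $\ge 5$ (each excludes at most $(2d)^4$ others), yielding $n^{d-2}/(320d^6)$ disjoint short alternating cycles.
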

Let us note that we have specialised the result for boxes of equal sides only for the sake of readability. The bound on the minimum degree is optimal up to a factor independent of $n$, as shown by the pyramid configuration defined in Example~\ref{ex:pyramids} where alternating cycles of length $\ell$ should stay at distance at most $\ell$ from the centers of the $(d-1)$-dimensional horizontal sections, see Figure~\ref{fig:pyramids}. We further remark that the total number of dimer configurations on $\bbQ_n^d$ is of order $e^{C(d)n^d}$ for some $C(d)>0$ as $n$ grows~\cite{Hammersley66}.

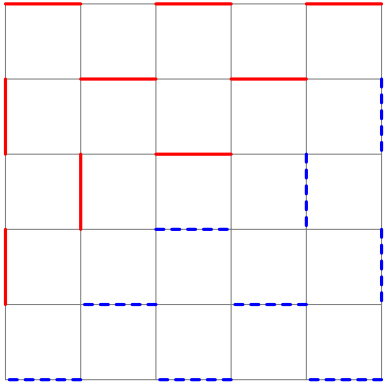
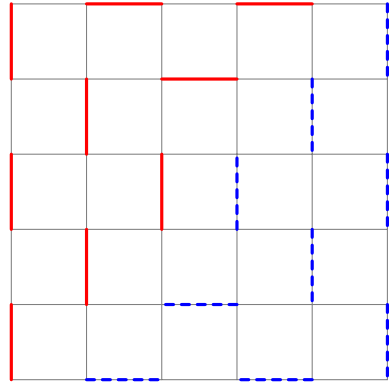
\begin{figure}
\centering
\begin{subfigure}{0.45\textwidth}
\centering
\begin{tikzpicture}[line cap=round,line join=round,x=1.0cm,y=1.0cm]
\draw[gray,very thin] (-2,-2)--(-2,-1)--(-1,-1)--(-1,-2)--(0,-2)--(0,-1)--(1,-1)--(1,-2)--(2,-2)--(2,-1)--(3,-1)--(3,-2);
\draw[gray,very thin] (-1,-1)--(-1,0)--(-2,0)--(-2,1)--(-1,1)--(-1,2)--(-2,2)--(-2,3);
\draw[gray,very thin] (-1,2)--(-1,3)--(0,3)--(0,2)--(1,2)--(1,3)--(2,3)--(2,2)--(3,2)--(3,3);
\draw[gray,very thin] (2,2)--(2,1)--(3,1)--(3,0)--(2,0)--(2,-1);
\draw[gray,very thin] (1,-1)--(1,0)--(2,0);
\draw[gray,very thin] (0,-1)--(0,0)--(-1,0);
\draw[gray,very thin] (0,2)--(0,1)--(-1,1);
\draw[gray,very thin] (1,2)--(1,1)--(2,1);
\draw[gray,very thin] (1,0)--(1,1);
\draw[gray,very thin] (0,0)--(0,1);
\draw[very thick,blue,dashed] (0,0)-- (1,0);
\draw[very thick,red] (1,1)-- (0,1);
\draw[very thick,red] (-1,2)-- (0,2);
\draw[very thick,red] (1,2)-- (2,2);
\draw[very thick,blue,dashed] (2,1)-- (2,0);
\draw[very thick,blue,dashed] (2,-1)-- (1,-1);
\draw[very thick,blue,dashed] (0,-1)-- (-1,-1);
\draw[very thick,red] (-1,0)-- (-1,1);
\draw[very thick,red] (-2,2)-- (-2,1);
\draw[very thick,red] (-2,3)-- (-1,3);
\draw[very thick,red] (0,3)-- (1,3);
\draw[very thick,red] (2,3)-- (3,3);
\draw[very thick,blue,dashed] (3,2)-- (3,1);
\draw[very thick,blue,dashed] (3,0)-- (3,-1);
\draw[very thick,blue,dashed] (3,-2)-- (2,-2);
\draw[very thick,blue,dashed] (1,-2)-- (0,-2);
\draw[very thick,blue,dashed] (-1,-2)-- (-2,-2);
\draw[very thick,red] (-2,-1)-- (-2,0);
\end{tikzpicture}
\caption{The dimer configuration on odd sections.\label{subfig:left}}
\end{subfigure}
\begin{subfigure}{0.45\textwidth}
\centering
\begin{tikzpicture}[line cap=round,line join=round,x=-1.0cm,y=1.0cm,rotate=90]    
\draw[gray,very thin] (-2,-2)--(-2,-1)--(-1,-1)--(-1,-2)--(0,-2)--(0,-1)--(1,-1)--(1,-2)--(2,-2)--(2,-1)--(3,-1)--(3,-2);
\draw[gray,very thin] (-1,-1)--(-1,0)--(-2,0)--(-2,1)--(-1,1)--(-1,2)--(-2,2)--(-2,3);
\draw[gray,very thin] (-1,2)--(-1,3)--(0,3)--(0,2)--(1,2)--(1,3)--(2,3)--(2,2)--(3,2)--(3,3);
\draw[gray,very thin] (2,2)--(2,1)--(3,1)--(3,0)--(2,0)--(2,-1);
\draw[gray,very thin] (1,-1)--(1,0)--(2,0);
\draw[gray,very thin] (0,-1)--(0,0)--(-1,0);
\draw[gray,very thin] (0,2)--(0,1)--(-1,1);
\draw[gray,very thin] (1,2)--(1,1)--(2,1);
\draw[gray,very thin] (1,0)--(1,1);
\draw[gray,very thin] (0,0)--(0,1);
\draw[very thick,blue,dashed] (0,0)-- (1,0);
\draw[very thick,red] (1,1)-- (0,1);
\draw[very thick,red] (-1,2)-- (0,2);
\draw[very thick,red] (1,2)-- (2,2);
\draw[very thick,blue,dashed] (2,1)-- (2,0);
\draw[very thick,blue,dashed] (2,-1)-- (1,-1);
\draw[very thick,blue,dashed] (0,-1)-- (-1,-1);
\draw[very thick,red] (-1,0)-- (-1,1);
\draw[very thick,red] (-2,2)-- (-2,1);
\draw[very thick,red] (-2,3)-- (-1,3);
\draw[very thick,red] (0,3)-- (1,3);
\draw[very thick,red] (2,3)-- (3,3);
\draw[very thick,blue,dashed] (3,2)-- (3,1);
\draw[very thick,blue,dashed] (3,0)-- (3,-1);
\draw[very thick,blue,dashed] (3,-2)-- (2,-2);
\draw[very thick,blue,dashed] (1,-2)-- (0,-2);
\draw[very thick,blue,dashed] (-1,-2)-- (-2,-2);
\draw[very thick,red] (-2,-1)-- (-2,0);
\end{tikzpicture}
\caption{The dimer configuration on even sections.\label{subfig:right}}
\end{subfigure}
    \caption{The pyramid dimer configuration on $\bbQ_n^3$ (see Example~\ref{ex:pyramids}; the parity of a section is determined by the parity of the vertex $(1,1,\bv')$). The horizontal sections alternate between the two 2-dimensional dimer configurations shown above. These contain no vertical dimers. The only short alternating cycles are around the middle column. Red dimers are solid, blue ones are dashed.}
    \label{fig:pyramids}
\end{figure}

Next, we focus our attention on unit hypercubes $\bbQ^d$ for which we can prove stronger results. Firstly, the following improvement of Theorem~\ref{th:degree:size} can be deduced in the same way.
\begin{theorem}[Degree and component size in $\bbQ^d$]
\label{th:degree:size:hypercube}
    Fix $d\ge3$. The minimum degree of $\cD_{d-1}(\bbQ^d)$ is at least $2^{d}/d^4$ and each connected component contains at least $2^{2^{d}/d^4}$ dimer configurations. 
\end{theorem}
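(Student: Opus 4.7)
The plan is to establish, for every dimer configuration $D$ on $\bbQ^d$, the existence of a family of at least $k = 2^d/d^4$ pairwise edge-disjoint alternating cycles of length at most $2d-2$. From such a family both conclusions of the theorem follow immediately: each cycle switches into a distinct neighbour of $D$ in $\cD_{d-1}(\bbQ^d)$, giving the minimum-degree bound, and since pairwise edge-disjoint alternating cycles can be switched in any order independently, the $2^k$ subsets of the family give rise to $2^k$ distinct configurations in the same connected component, yielding the component-size bound.

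The production of this family should follow the same template as Theorem~\ref{th:degree:size}: a two-stage argument that first locates many ``locally dense'' substructures of $\bbQ^d$ forced to contain short alternating cycles, and then applies a greedy or matching argument to pass to an edge-disjoint sub-family at the cost of a polynomial factor in $d$. In the large-box setting the natural source of dense substructures is Theorem~\ref{th:counting} applied to translated unit cubes, but in $\bbQ^d$ no such translation is available; the self-similarity of $\bbQ^d$ compensates. By pigeonhole some direction $i^* \in [d]$ carries at least $2^{d-1}/d$ dimers of $D$, and the corresponding dimers are indexed by a relatively dense subset $S$ of the $(d-1)$-dimensional sub-hypercube of endpoints. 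Pairs of elements of $S$ that lie in a common low-dimensional face of $\{1,2\}^{d-1}$ give rise to alternating cycles of length at most $2d-2$, obtained by closing the two parallel dimers along a short alternating path inside the corresponding sub-cube of $\bbQ^d$. A Tur\'an-type count in the hypercube then produces polynomially many such pairs.

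The main obstacle I anticipate is the quantitative extraction of cycles of length at most $2d-2$ rather than the slacker $4d-2$ afforded by Theorem~\ref{th:degree:size}: the bound on the length forces the closing alternating paths of the extracted cycles to stay within sub-cubes of small dimension, requiring a more careful case analysis than in the general box setting. The subsequent reduction to an edge-disjoint sub-family is routine, relying only on the observation that any dimer belongs to at most $\operatorname{poly}(d)$ short alternating cycles, so that a greedy extraction loses only a polynomial factor in $d$; the specific denominator $d^4$ in the stated bound reflects exactly this polynomial slack (one factor of $d$ coming from the pigeonhole over directions and the remainder from the cycle-sharing analysis).
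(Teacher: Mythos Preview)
Your overall template (find many pairwise disjoint short alternating cycles, then switch subsets independently) is exactly right, but the mechanism you propose for \emph{producing} the cycles does not work, and this is precisely the point where the paper's argument is different and decisive. You suggest that two dimers in the majority direction $i^*$ whose endpoints lie in a common low-dimensional face of $\{1,2\}^{d-1}$ can be closed into an alternating cycle of length at most $2d-2$ by an alternating path ``inside the corresponding sub-cube''. But an alternating path must use the actual dimers of $D$ at its intermediate vertices, and nothing prevents those dimers from leaving the sub-cube; two parallel $i^*$-dimers at Hamming distance $2$ already need not be joinable by any alternating path of bounded length. Worse, the set $S\subseteq\{1,2\}^{d-1}$ of $i^*$-dimer endpoints, of size about $2^{d-1}/d$, can be an independent set in $\bbQ^{d-1}$ (any subset of one colour class works), in which case there are no Hamming-distance-$1$ pairs in $S$ at all and hence not a single alternating $4$-cycle arising from your construction. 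So neither the existence of the short cycles nor the Tur\'an-type count you allude to is available.

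The paper sidesteps all of this with a purely local lemma that makes no reference to a majority direction: for $n=2$ every vertex $\bw\in\bbQ^d$ is automatically authorised (there is no room for three collinear vertices), and looking at the $d$ neighbours of $\bw$ together with their dimer partners one obtains a digraph on at most $2d-2$ vertices with minimum out-degree $1$, which forces an alternating cycle of length at most $2d-2$ entirely inside the second neighbourhood of $\bw$ (this is Lemma~\ref{lem:authorised}). Since the ball of radius $4$ in $\bbQ^d$ has fewer than $d^4$ vertices, one greedily picks $2^d/d^4$ vertices at pairwise distance at least $5$; their second neighbourhoods are disjoint, so the corresponding cycles are vertex-disjoint and can be switched independently. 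The entire proof is two lines once the local lemma is in hand; the $d^4$ is simply the ball volume, not an artefact of a Tur\'an argument.
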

While this result is only interesting for large $d$, at which point the size of alternating cycles also grows, we emphasise that the size of cycles we allow is very small compared to the total volume of the hypercube. At the cost of allowing switching along cycles twice as long, we are able to prove ergodicity in the following strong form.
\begin{theorem}[Ergodicity on $\bbQ^d$]
\label{th:ergodicity:hypercube}
For every $d\ge 2$, the graph $\cD_{2d-2}(\bbQ^d)$ is connected and has diameter at most $(d-1)2^{d-1}$.
\end{theorem}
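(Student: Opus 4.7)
The plan is to proceed by induction on $d$. The base case $d=2$ is immediate: $\bbQ^2$ is the 4-cycle whose two dimer configurations are joined by the unique 4-cycle switching, giving diameter $1\le(2-1)2^1=2$. For the inductive step, I view $\bbQ^d=\bbQ^{d-1}\times\{1,2\}$ as a bottom face $B_-$ and top face $B_+$ (each a copy of $\bbQ^{d-1}$) joined by $2^{d-1}$ vertical edges in direction $d$, and fix the canonical target $D^*$ in which every dimer is vertical. By the triangle inequality, it suffices to show every configuration $D$ lies within distance $(d-1)2^{d-2}$ of $D^*$ in $\cD_{2d-2}(\bbQ^d)$.

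Decompose $D$ into a set $U\subseteq\{1,2\}^{d-1}$ of vertical-dimer positions together with horizontal matchings $M_\pm$ of $\bbQ^{d-1}\setminus U$ inside $B_\pm$. When $U=\emptyset$, both $M_\pm$ are perfect matchings of $\bbQ^{d-1}$, and the inductive hypothesis supplies at most $(d-2)2^{d-2}$ switchings of alternating cycles of length at most $4d-8$ contained in $B_-\subset\bbQ^d$ (hence legal in $\cD_{2d-2}(\bbQ^d)$) that transform $M_-$ into $M_+$. Subsequently, the $2^{d-2}$ vertex-disjoint 4-cycle switchings, each combining a shared horizontal edge (in both $B_-$ and $B_+$) with its two connecting vertical edges, verticalize these pairs and produce $D^*$, for a total of at most $(d-2)2^{d-2}+2^{d-2}=(d-1)2^{d-2}$ switchings.

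For $U\neq\emptyset$ I would argue by amortization in rounds, where each round uses at most $d-1$ switchings and strictly increases $|U|$ by at least $2$; the $\le 2^{d-2}$ rounds needed to reach $|U|=2^{d-1}$ then give the bound. A round proceeds by cases: if $M_-\cap M_+\neq\emptyset$, one 4-cycle switching increases $|U|$ by $2$; if $M_-\cup M_+$ has a cycle $C$ of length at most $2d-2$ in $\bbQ^{d-1}$, the lift of $C$ alternately between $B_-$ and $B_+$ is an alternating cycle of length $2|C|\le 4d-4$ in $\bbQ^d$ whose switching increases $|U|$ by $|C|\ge 4$; otherwise, at most $d-2$ preparatory switchings inside $B_-$ modify $M_-$ so as to share an edge with $M_+$, followed by a verticalization. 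The main obstacle is this last subcase, where the inductive hypothesis must be applied to matchings of $\bbQ^{d-1}\setminus U$ rather than of $\bbQ^{d-1}$; the resolution is either to prove a strengthened inductive statement accommodating a missing vertex set, or to first empty $U$ via preliminary de-verticalization switchings (4-cycles through adjacent pairs in $U$, or longer alternating cycles through non-adjacent pairs), at a carefully bounded extra cost.
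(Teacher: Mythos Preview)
Your argument is incomplete, and you have yourself located the gap. The base case and the subcase $U=\emptyset$ are correct: transforming $M_-$ into $M_+$ inside $B_-$ via the inductive hypothesis and then verticalizing by $2^{d-2}$ disjoint $4$-cycle switchings is a clean reduction. For $U\neq\emptyset$, your subcases (a) $M_-\cap M_+\neq\emptyset$ and (b) a cycle of length at most $2d-2$ in $M_-\cup M_+$ are also valid (the lifted cycle is indeed alternating of length $2|C|$ and its switching adds $|C|$ vertices to $U$). The difficulty is that neither (a) nor (b) need occur: when $M_-\cap M_+=\emptyset$, the $2$-regular graph $M_-\cup M_+$ on $\bbQ^{d-1}\setminus U$ may consist of a single long cycle, so no short lift exists. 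Your subcase (c) then asks for ``preparatory switchings inside $B_-$'' to create a shared edge, but as you note, the inductive hypothesis concerns perfect matchings of $\bbQ^{d-1}$, not of $\bbQ^{d-1}\setminus U$, and there is no switching dynamics available on that induced subgraph to which you can appeal. Neither a strengthened inductive statement for arbitrary $U$ nor the alternative of first emptying $U$ is worked out; both are non-trivial, and the second is precisely the heart of the matter.

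The paper takes exactly the ``empty $U$ first'' route, but with a substantial mechanism you have not supplied. Given any crossing dimer $\bv\bv'$, one explores from $\bv$ along alternating paths in the auxiliary graph $\Gamma$ obtained from $\bbQ^d$ by deleting all crossing \emph{non}-dimer edges, and shows that after at most $2d-3$ double steps the set of reachable even vertices is all of the $2^{d-1}$ even vertices of $\bbQ^d$, hence contains a neighbour of $\bv'$ in $Q_2$. The growth is controlled by Harper's vertex-isoperimetric inequality on $\bbQ^{d-2}$ (Theorem~\ref{thm:Harper}), applied via Lemma~\ref{lem:even} and the submodularity-type Lemma~\ref{lem:technical} to the even parts of the exploration in $Q_1$ and $Q_2$ separately; this is the missing idea. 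One thus obtains an alternating cycle of length at most $4d-4$ through $\bv\bv'$ using only crossing dimers as crossing edges, so switching it strictly decreases $|U|$; at most $2^{d-2}$ such switchings empty $U$. The induction is then applied to $Q_1$ and $Q_2$ \emph{separately}, each targeting the configuration with all dimers parallel to direction~$1$, for a further $2\cdot(d-2)2^{d-3}=(d-2)2^{d-2}$ switchings and a total of $(d-1)2^{d-2}$. Note that even granting the de-verticalization step, your choice of target $D^*$ (all dimers in direction $d$) forces an extra verticalization round and would yield only $d\cdot 2^{d-2}$; the paper's choice of a target with no crossing dimers avoids this loss.
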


In view of this bound on the diameter of the dimer configuration space on the unit hypercube that is almost linear (as a function of the volume of the hypercube), it is natural to ask whether the diameter of $\cD_{\ell}(\bbQ_n^d)$ can also be linear in the volume. In two dimensions, this is not the case, since a lower bound of order $n^3$ follows by considering the height functions of the two configurations in Figure~\ref{fig:pyramids}, while for non-bipartite graphs such as the triangular lattice, the diameter can be linear in the volume~\cite{Kenyon96}. In higher dimensions, the situation is only clarified by the following result, which also offers a proof in two dimensions without height functions.
\begin{theorem}[Diameter lower bound]
\label{th:diameter}
For all $d,\ell,n\ge 2$ with $n$ even, the graph $\cD_{\ell}(\bbQ_n^d)$ has diameter at least \[\frac{n^{d-1}(n^2-1)}{6\ell^2}.\]
\end{theorem}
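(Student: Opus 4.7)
The approach I would take is to exhibit an invariant $\Phi$ on dimer configurations of $\bbQ_n^d$ and two specific configurations $D_0, D_1$ such that $|\Phi(D_0) - \Phi(D_1)| \ge n^{d-1}(n^2-1)/6$, while $|\Phi(D) - \Phi(D')| \le \ell^2$ whenever $D$ and $D'$ differ by a single switching of a $2\ell$-cycle. The diameter lower bound is then immediate.

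In dimension $d = 2$, this is the classical height-function argument. Take $\Phi(D) = \sum_f h_D(f)$, the volume under the Thurston height function. Under switching of a $2\ell$-alternating cycle $C$, the height function changes by exactly $\pm 4$ on the interior region of $C$ and is unchanged outside, so $|\Delta\Phi| \le 4\cdot\mathrm{area}(C) \le \ell^2$ by the planar isoperimetric inequality (a simple cycle of length $2\ell$ encloses area at most $\ell^2/4$). Taking $D_0, D_1$ to be "pyramid" and "anti-pyramid" configurations whose height functions satisfy $h_{D_0} = -h_{D_1}$ with peak height $\sim n/2$, one computes $|\Phi(D_0) - \Phi(D_1)| = n(n^2-1)/6$ with optimal constants.

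To lift this to $d \ge 3$, I would stack the construction across the extra coordinates. Take $D_0, D_1$ to be dimer configurations supported entirely in $(x_1, x_2)$-planes (no direction-$i$ dimers for $i \ge 3$), placing a 2D pyramid (resp.\ anti-pyramid) in each slice indexed by $\bz \in [n]^{d-2}$. Define $\Phi(D) = \sum_{\bz \in [n]^{d-2}} V_\bz(D)$ as the sum over slices of the 2D height-function volume of the restriction $D|_\bz$. For $D_0, D_1$, which restrict to full 2D matchings in each slice, this gives $|\Phi(D_0) - \Phi(D_1)| = n^{d-2} \cdot n(n^2-1)/6 = n^{d-1}(n^2-1)/6$, exactly the required bound.

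The main obstacle is bounding $|\Delta \Phi| \le \ell^2$ for switchings of $2\ell$-cycles that span multiple slices, that is, those involving direction-$i$ edges for some $i \ge 3$. In this case the restriction $D|_\bz$ is only a partial matching with some vertices ("defects") matched out-of-slice, and $V_\bz$ must be extended accordingly via a height function with prescribed $\pm 4$-jumps along curves joining paired defects. The bound then follows by combining a discrete isoperimetric inequality in $\bbZ^d$ (any closed cycle of length $2\ell$ bounds a 2-chain of area $\le \ell^2/4$) with a Stokes-type computation that ties the change in each slice's contribution to the cycle's projected area in the $(x_1, x_2)$-direction. Verifying this bound uniformly across both in-slice and cross-slice switchings — carefully bookkeeping how defect strings evolve — is the technical heart of the argument.
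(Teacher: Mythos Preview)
Your high-level strategy (a potential $\Phi$ with $|\Delta\Phi|\le\ell^2$ per switch and a large gap between two explicit configurations) is exactly right, and the $d=2$ case via height-volume is standard. The gap is in $d\ge3$: your potential $\Phi=\sum_\bz V_\bz$ is only defined when every slice $D|_\bz$ is a perfect matching of the 2D box. For a generic $D$ it is not, and you propose extending $V_\bz$ via a height function with branch cuts ``along curves joining paired defects.'' That is not a definition: the value of $V_\bz$ depends on which curves you draw, so $\Phi(D)$ is ill-defined as written. You cannot prove a Lipschitz bound for a quantity you have not pinned down, and you yourself flag the cross-slice case as the unfinished ``technical heart.'' There is no evident canonical pairing of defects that simultaneously makes $\Phi$ single-valued and keeps the per-switch bound $\le\ell^2$; this is a genuine obstacle, not just missing bookkeeping.

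The paper bypasses height functions entirely. It colours each vertex red or blue according to whether its dimer points from the even endpoint to the odd one in a positive or negative coordinate direction; this is well-defined for every configuration in every dimension. A short parity argument shows that every configuration has exactly $n^d/2$ red vertices, so a switch along a $2\ell$-cycle can be viewed as relocating at most $\ell$ red ``tokens,'' each by graph distance at most $\ell$---hence at most $\ell^2$ units of transport work per switch. The pyramid configuration and its colour-inverse place their red vertices on opposite sides of the hyperplane $u_1=u_2$; summing the distances of those vertices to the hyperplane yields the numerator $n^{d-1}(n^2-1)/6$ directly. No slicing, no defects, no Stokes. If you want to rescue your framework, the cleanest fix is to rewrite the 2D height-volume as $\sum_{e\in D}\omega(e)$ for an explicit edge weight $\omega$ (which is possible by summation by parts), then extend to $\bbZ^d$ by setting $\omega=0$ on edges in directions $\ge3$; your Stokes bound then becomes rigorous---but at that point you have essentially reproduced the paper's transport argument in different language.
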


Finally, while it is tempting to look for the minimal value of $\ell$ ensuring ergodicity in each setting, this is a rather sensitive matter. In this direction, we prove the following result.
\begin{theorem}[Ergodicity on $\bbT_{m,n}$]
    \label{th:triangular}
    For all positive integers $m,n$ with $mn$ even, the graph $\cD_3(\bbT_{m,n})$ is connected and has diameter at most $2mn$.
\end{theorem}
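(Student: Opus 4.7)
Given two configurations $D_1, D_2$ on $\bbT_{m,n}$, I would consider their symmetric difference $S := D_1 \triangle D_2$, which decomposes into a disjoint union of alternating cycles in $\bbT$. I plan to prove by induction on $|S|$ that $D_1$ can be transformed into $D_2$ by alternating $4$- and $6$-cycle switches. Since $|S| \leq mn$, this will yield the diameter bound $2mn$, with the factor of $2$ allowing for the possibility that an individual switch may decrease $|S|$ by only $2$ rather than $4$ or $6$.

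The crucial local claim is the following: whenever $D_1 \neq D_2$, there exists an alternating $4$- or $6$-cycle in $D_1$ whose switch strictly decreases $|S|$. To prove this, I would pick an innermost alternating cycle $\Gamma$ of $S$ in the planar embedding of $\bbT$ — one whose bounded region in the plane contains no other cycles of $S$. Inside that region the two configurations coincide, and the interior vertices are matched by common dimers. As $\Gamma$ bounds a simple polygon and interior angles at vertices of $\bbT$ are multiples of $60^\circ$, $\Gamma$ must contain a convex corner of angle $60^\circ$ or $120^\circ$. At a $60^\circ$ corner $v$ with $\Gamma$-neighbours $u_1,u_2$, the three vertices form a triangle of $\bbT$, and I would build a short alternating cycle by combining the $D_1$-edge of $\Gamma$ at $v$ with either the vertex $w$ opposite $v$ across the edge $u_1 u_2$ (giving a $4$-cycle) or with the $D_1$-partner of the next vertex on $\Gamma$ past $u_2$ (giving a $6$-cycle). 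At a $120^\circ$ corner, $u_1,u_2$ share a unique common neighbour $w$ inside $\Gamma$ adjacent to $v$, and the $D_1$-dimer at $w$ provides the second dimer of the short cycle. In each sub-case at least one edge used lies on $\Gamma \subseteq S$, so switching the short cycle strictly decreases $|S|$.

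The principal difficulty is the case analysis at the convex corner: depending on which of $v u_1, v u_2$ is the $D_1$-dimer and on the local $D_1$-pattern near $v$, one needs to verify in every sub-case that a short alternating cycle in $D_1$ (not merely in $S$) is actually available and contained in $\bbT_{m,n}$. Special care is required when $v$ lies near $\partial \bbT_{m,n}$, where some of the auxiliary vertices of the construction may be missing; here the rectangular parallelogram shape of the box is essential, since Figure~\ref{fig:8cyc} shows that on more general planar triangular domains the analogous statement fails without also allowing $8$-cycles. Once the local claim is established, the diameter estimate follows by summing over the inductive steps and bounding $|S| \leq mn$.
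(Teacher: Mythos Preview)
Your plan is genuinely different from the paper's, and its central step contains a real error. The sentence ``at least one edge used lies on $\Gamma\subseteq S$, so switching the short cycle strictly decreases $|S|$'' is false: if $C$ is the cycle you switch, the new symmetric difference is $S\triangle C$, and $|S\triangle C|<|S|$ holds only when \emph{strictly more than half} of the edges of $C$ lie in $S$. A $4$-cycle sharing a single edge with $S$ \emph{increases} $|S|$ by $2$, and one sharing two edges leaves $|S|$ unchanged.

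Securing ``more than half in $S$'' at a convex corner of an innermost cycle is exactly the content of the Kenyon--R\'emila argument, which in general needs $8$-cycles. You offer no mechanism by which the box shape helps here: the innermost cycle of $S$ may sit deep in the interior of $\bbT_{m,n}$, where the local picture is indistinguishable from a general domain. Concretely, let $D_1,D_2$ coincide everywhere except on a translate of the $8$-cycle of Figure~\ref{fig:8cyc} placed well inside a large box. One checks that no three consecutive edges of that $8$-cycle close into a $4$-cycle of $\bbT$, and the handful of $6$-cycles of $\bbT$ sharing four or more edges with $S$ fail to be alternating in $D_1$; so it is not at all clear that any single $4$- or $6$-switch in $D_1$ decreases $|S|$, and your induction may stall. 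The paper sidesteps this entirely by abandoning the symmetric-difference monovariant: it drives every configuration to the canonical horizontal brickwork by an explicit case analysis showing that the bottom row can be made horizontal using at most four $4$/$6$-switches per pair of sites, and then recurses on $\bbT_{m,n-1}$.
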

This should be compared to~\cite{Kenyon96} showing an analogous result for $8$-ergodicity but for any simply connected domain of the triangular lattice. While there do exist domains for which 8-cycles are necessary (see Figure~\ref{fig:8cyc}), we show that for boxes, cycles of length 4 and 6 suffice. While there is a dimer configuration on $\bbT_{4,3}$ without alternating 4-cycles (see Figure~\ref{fig:no4cycs}), it remains unclear whether 6-cycles can be avoided for sufficiently large values of $mn$.

With the exception of Theorems~\ref{th:degree:size} and~\ref{th:degree:size:hypercube}, the above results are shown by completely different means, thus providing several approaches for further use. The proofs provided in subsequent sections are therefore completely independent.

\section{Extraction of a dense \texorpdfstring{$\bbQ^d$}{Qd}: proof of Theorem~\ref{th:counting}}

In this section, we show that any dimer configuration on $\bbQ_\bn^d$ contains a unit hypercube containing at least $2^{d-2}+1$ dimers. The proof is a simple double counting argument.

\begin{proof}[Proof of Theorem~\ref{th:counting}]
Fix a dimension $d\ge 2$, a $d$-dimensional shape $\bn$ and a dimer configuration $D$ on $\bbQ_\bn^d$. 
Let $\Pi_{\bn}^d = \{0,\dots,n_1-2\}\times\dots\times\{0,\dots,n_d-2\}$. We count the couples $(\bu,\bx)\in\bbQ_\bn^d\times\Pi_\bn^d$ such that the dimer of $\bu$ in $D$ is contained in the unit hypercube $\bx+\bbQ^d\subseteq \bbQ^d_\bn$. We say that a vertex $\bw\in\bbQ_\bn^d$ is of \emph{type} $I(\bw)=\{i\in[d]:w_i\in\{1,n_i\}\}$, which indicates in which coordinates $\bw$ is on the boundary of $\bbQ^d_\bn$. The type of a dimer $\bu\bv$ is defined as $I(\bu\bv)=\{i\in[d]:u_i=v_i\in\{1,n_i\}\}$ so that $I(\bu\bv)\subseteq I(\bu)\cap I(\bv)$. Then, for every $i\in I(\bu\bv)$, all unit hypercubes $\bx+\bbQ^d\subseteq\bbQ_\bn^d$ that contain $\bu\bv$ contain only vertices whose $i$-th component is either in the set $\{1,2\}$ (if $u_i = v_i = 1$) or in the set $\{n_i-1,n_i\}$ (if $u_i = v_i = n_i$). One may easily check that there are exactly $2^{d-|I(\bu\bv)|-1}\ge 2^{d-|I(\bu)|-1}$ such unit hypercubes.

On the other hand, the number of vertices of type $I\subseteq [d]$ is $2^{|I|}\prod_{i\in[d]\setminus I}(n_i-2)$. Treating separately the corners of $\bbQ_\bn^d$, each of which is contained in a single unit hypercube, we obtain that the number of couples $(\bu,\bx)$ as above is at least
\[2^d+\sum_{I\subsetneq [d]}2^{d-|I|-1}\times2^{|I|}\prod_{i\in[d]\setminus I}(n_i-2)=2^{d-1}\left(1+\sum_{I\subseteq[d]}\prod_{i\in[d]\setminus I}(n_i-2)\right)=2^{d-1}\left(1+\prod_{i\in[d]}(n_i-1)\right).\]
Since $|\Pi_{\bn}^d| = \prod_{i\in [d]} (n_i-1)$ and each dimer is counted twice for every unit hypercube that contains it (once for each of its endvertices), there has to be a unit hypercube containing at least $2^{d-2}+1$ dimers, as desired.
\end{proof}

\section{Degree and component size: proof of Theorem~\ref{th:degree:size} and~\ref{th:degree:size:hypercube}}
\label{sec:degree}

Throughout this section, we fix $d\ge 3$, an even positive integer $n$ and a dimer configuration $D$ on $\bbQ^d_n$. We say that a vertex $\bw\in\bbQ_n^d$ is \emph{forbidden} (by a dimer $\bu\bv\in D$) if there exists $i\in[d]$ such that $u_j=v_j=w_j$ for all $j\in[d]\setminus\{i\}$, $\bu\bv$ is a dimer and $\bu\bw$ or $\bv\bw$ is an edge in $\bbQ^d_n$. In other words, the dimer $\bu\bv$ forbids $\bw$ if $\bw$ is a neighbour of $\bu$ or $\bv$ aligned with $\bu\bv$. A vertex that is not forbidden by any dimer in $D$ is called \emph{authorised}. Observe that for $n\ge 3$, a dimer containing a vertex on the boundary of $\bbQ^d_n$ (that is, one with less than $2d$ neighbours) may forbid one or two vertices in $\bbQ^d_n$, while all other dimers forbid two vertices.

We next show that there is a short alternating cycle close to each authorised vertex.

\begin{lemma}\label{lem:authorised}
For any authorised vertex $\bw\in\bbQ_n^d$, there is an alternating cycle of length at most $4d-2$ contained in the second neighbourhood of $\bw$ in $\bbQ^d_n$. Moreover, if $n=2$, then every vertex in $\bbQ^d$ has an alternating cycle of length at most $2d-2$ in its second neighbourhood.
\end{lemma}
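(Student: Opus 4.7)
I would prove this lemma by translating the authorisation hypothesis into a rigid perpendicularity constraint on the dimers at the neighbours of $\bw$, and then extracting the short cycle from a functional digraph on those neighbours.

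\textbf{Step 1 (local structure at $\bw$).} Let $\bp$ denote the dimer partner of $\bw$, and for every neighbour $\bw'$ of $\bw$ write $d(\bw') := \bw' - \bw \in \{\pm e_1, \ldots, \pm e_d\}$. The authorisation of $\bw$ says exactly that no dimer points into $\bw$ from outside, i.e.\ the dimer direction $\delta(\bw')$ at $\bw'$ is never $d(\bw')$. For $\bw' \ne \bp$, also $\delta(\bw') \ne -d(\bw')$ (which would match $\bw'$ to $\bw$), so $\delta(\bw') \perp d(\bw')$. Let $V$ be the set of non-partner neighbours of $\bw$: then $|V| = \deg(\bw) - 1 \le 2d-1$ in general, and $|V| = d-1$ when $n=2$ (every vertex of $\bbQ^d$ being a corner).

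\textbf{Step 2 (functional digraph; quick $4$-cycle).} Define $F \colon V \to \{\text{neighbours of } \bw\}$ by $F(\bw') := \bw + \delta(\bw')$; since the dimer endpoint $\bw' + \delta(\bw') = \bw + d(\bw') + \delta(\bw')$ lies in $\bbQ_\bn^d$ and $F(\bw')$ agrees with it in every coordinate but $d(\bw')$, we have $F(\bw') \in \bbQ_\bn^d$ as well. If $F(\bw') = \bp$ for some $\bw' \in V$, then $\bw, \bp, \bp + d(\bw'), \bw'$ is an alternating $4$-cycle in the second neighbourhood (the dimers being $\bw\bp$ and $\bw'(\bp + d(\bw'))$), and we are done. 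Otherwise $F$ restricts to a self-map of $V$, whose functional graph contains a cycle. Fixed points are excluded by $\delta(\bw') \perp d(\bw')$, and a $2$-cycle $\bw'_1 \leftrightarrow \bw'_2$ would force both $\bw'_i$ to be matched with the single vertex $\bw + d(\bw'_1) + d(\bw'_2)$. Hence a minimal cycle in $F$ has length $L$ with $3 \le L \le |V|$.

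\textbf{Step 3 (assembly).} Given a minimal $F$-cycle $\bw'_1 \to \cdots \to \bw'_L \to \bw'_1$, set $\bq_m := \bw + d(\bw'_m) + d(\bw'_{m+1})$ (indices mod $L$). By construction, $\bq_m$ is the dimer partner of $\bw'_m$ and is adjacent to $\bw'_{m+1}$, and every $\bw'_m, \bq_m$ lies at distance at most $2$ from $\bw$. Thus $\bw'_1, \bq_1, \bw'_2, \bq_2, \ldots, \bw'_L, \bq_L$ is a closed walk in the second neighbourhood alternating between dimer and non-dimer edges, of length $2L \le 2|V|$; this yields $\le 4d-2$ in general and $\le 2d-2$ when $n=2$.

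The step requiring the most care is verifying that the closed walk above is a \emph{simple} cycle: the $\bw'_m$'s are distinct by minimality of the $F$-cycle, and a coincidence $\bq_m = \bq_{m'}$ translates to the multiset identity $\{d(\bw'_m), d(\bw'_{m+1})\} = \{d(\bw'_{m'}), d(\bw'_{m'+1})\}$, which together with the distinctness of the $d(\bw'_m)$'s and $L \ge 3$ collapses to $m = m'$. Distinctness of the $\bw'_m$'s from the $\bq_{m'}$'s is immediate from their distances to $\bw$.
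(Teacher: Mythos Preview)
Your proof is correct and follows essentially the same idea as the paper's: both use that authorisation forces the dimer at each non-partner neighbour $\bw'$ to be perpendicular to $\bw'-\bw$, then build a functional/digraph structure on the neighbours of $\bw$ (together with their dimer partners) and extract a short alternating cycle from a directed cycle therein. Your map $F$ is precisely the two-step composition $\bv^i\to\bu^i\to\bv^j$ in the paper's digraph, and your careful verification that the resulting $2L$-walk is simple is a detail the paper leaves implicit; conversely, you might make explicit that in the $n=2$ case every vertex is automatically authorised (since $\bw'+d(\bw')$ leaves the box), and that when $|V|\le 2$ the exclusion of $1$- and $2$-cycles forces $F$ to hit $\bp$.
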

\begin{proof}
Let $(\bv^i)_{i=1}^{m}$ be the neighbours of the authorised vertex $\bw$ in $\bbQ^d_n$ for some $m\in [d,2d]$. Moreover, suppose without loss of generality that the dimers in the second neighbourhood of $\bw$ are $\bw \bv^1$ and $(\bu^i \bv^i)_{i=2}^{m}$.

If $\bu^i$ is a neighbour of $\bv^1$ in $\bbQ^d_n$ for some $i\in [2,m]$, then $\bv^1, \bw, \bv^i, \bu^i$ is an alternating cycle of length $4$, as desired. Suppose that none of $(\bu^i)_{i=2}^{m}$ is a neighbour of $\bv^1$. Note that since $\bw$ is an authorised vertex, the vectors $\bw-\bv^i$ and $\bv^i-\bu^i$ are orthogonal for every $i\in [2,m]$, which implies  that $\bu^i$ has exactly two neighbours among $(\bv^j)_{j=2}^{m}$ in $\bbQ^d_n$ (or otherwise said, vertex $(1,1,0,0,\dots,0)$ has exactly two neighbours in common with the origin in $\bbZ^d$). Now, consider the graph induced from $\bbQ^d_n$ by the vertices $\bigcup_{i=2}^{m} \{\bu^i, \bv^i\}$, and orient the dimer edges from $(\bv^i)_{i=2}^{m}$ to $(\bu^i)_{i=2}^m$ and the other edges from $(\bu^i)_{i=2}^m$ to $(\bv^i)_{i=2}^m$. We obtain a digraph on $2m-2$ vertices where every vertex has out-degree at least 1, and for every (directed) path in this digraph, exactly one of every two consecutive edges is a dimer. Thus, one may find an alternating cycle of length at most $2m-2\le 4d-2$ by starting from any vertex and making steps in the digraph until the first time a vertex is visited twice.

It remains to notice that, when $n=2$, every vertex is authorised and has degree $d$ in $\bbQ^d$.
\end{proof}

Lemma~\ref{lem:authorised} implies that if there are many authorised vertices in a dimer configuration on $\bbQ^d_n$, then there must be many alternating cycles in that configuration. The next lemma shows that there must be many authorised vertices.

Our argument goes roughly as follows. We may assume that a positive fraction of the dimers point in the ``vertical'' $d$-th direction. Since each dimer forbids at most two vertices, each vertex is forbidden at most once on average and no more than $2d$ times in total. Therefore, for every $\ell\ge 1$, if either at least $\ell$ vertices are forbidden more than once or at least $\ell$ dimers only forbid a single vertex, then there must be at least $\ell$ authorised vertices.

We partition the vertices of $\bbQ_n$ into ``levels'', according on their vertical coordinate. We next consider the number $k_i$ of dimers with endpoints on levels $i-1$ and $i$. Vertical dimers have two effects on the vertices at level $i$: they forbid $k_{i-1}+k_{i+2}$ vertices (with multiplicity), but they occupy $k_i+k_{i+1}$ vertices on level $i$, which cannot contribute to forbidding any vertex on that level. This gives rise to the expression $\Delta_i=((k_{i+2}-k_{i+1})-(k_i-k_{i-1}))/2$, reminiscent of a discrete second derivative. We show that, if $|\Delta_i|$ is large, then, at level $i$, there are many
\begin{itemize}
\item dimers forbidding only one vertex, whose number is denoted by $s_i$, or
\item vertices forbidden by zero, two or more dimers, whose number is denoted by $N_i$.
\end{itemize}
Since we already established that $\sum_i (s_i+N_i)$ is small, the sequence $k_i$ is approximately a one-dimensional harmonic function and, therefore, approximately linear. Recalling that $\sum_ik_i$ is a positive fraction of the volume of $\bbQ_n^d$, this implies that the boundary values $k_2$ and $k_{n}$ cannot both be small. But these boundary vertical dimers only forbid a single vertex, which implies that there are many authorised vertices, as claimed above.

\begin{lemma}\label{lem:manyvert}
Fix $n\ge 4$. Then, there are at least $n^{d-2}/(20d^2)$ authorised vertices.
\end{lemma}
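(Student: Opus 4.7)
The plan is to prove the lemma by bounding from above the number of non-authorised vertices $|F|$, where $F = \bigcup_{i\in[d]} F_i$ and $F_i$ is the set of vertices forbidden in direction $i$; the authorised count is then $|X| = n^d - |F|$.

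I would first carry out an exact line-by-line counting of $|F_i|$. On each axis-parallel line in direction $i$, a direction-$i$ dimer forbids precisely the two positions adjacent to its endpoints along the line, modulo two explicit sources of slack. A dimer is \emph{boundary-touching} when its smaller endpoint has $i$-th coordinate in $\{1,n-1\}$: for $n\ge 4$ one of the two candidate positions then lies outside $\bbQ^d_n$, reducing the count by $1$. Two direction-$i$ dimers on the same line whose starting positions differ by exactly $3$ both forbid the single vertex between them, reducing the count by a further $1$. Letting $B$ be the total number of boundary-touching dimers and $\mathrm{OV}$ the total number of such distance-$3$ in-direction pairs, a careful bookkeeping gives the exact identity $\sum_i |F_i| = n^d - B - \mathrm{OV}$.

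Passing from $\sum_i |F_i|$ to $|F|$ by inclusion-exclusion, I set $\mathrm{MDO} = \sum_{\bw}(\deg_F(\bw)-1)^+$, where $\deg_F(\bw) = |\{i : \bw \in F_i\}|$, so that $|F| = \sum_i|F_i| - \mathrm{MDO}$. Combining yields the exact identity
\[|X| = B + \mathrm{OV} + \mathrm{MDO}.\]
This reduces the lemma to the purely combinatorial inequality $B + \mathrm{OV} + \mathrm{MDO} \ge n^{d-2}/(20d^2)$, which is where the real work lies: the identity alone is a reformulation, not a proof.

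Establishing this lower bound is the main obstacle. The shape of the target, with $n^{d-2}$ matching the number of axis-parallel two-dimensional slices and the denominator $d^2$ echoing $\binom{d}{2}$ pairs of directions, strongly suggests a double counting over the $\binom{d}{2}n^{d-2}$ 2D slices of $\bbQ^d_n$. For each ordered pair of directions $(i,j)$ and each 2D slice $S$ in those directions, I would try to extract an $\Omega(1)$ contribution on average to $B+\mathrm{OV}+\mathrm{MDO}$ via a case split: when $S$ has many dimers perpendicular to it, an in-slice 2D analogue of the counting in Theorem~\ref{th:counting} forces many vertices to be authorised in directions $i,j$ within $S$, feeding into $\mathrm{MDO}$ once authorisation fails in the remaining $d-2$ directions; when $S$ is close to a 2D perfect matching, extremal coordinates of $\bbQ^d_n$ combined with a corner-forcing argument (the $2^d$ corners cannot all be matched by non-boundary-touching dimers) contribute to $B$. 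The most delicate step is to balance the contributions from all $\binom{d}{2}$ direction-pairs simultaneously, without excessive double counting and with the constants calibrated precisely enough to reach the stated $1/(20d^2)$ factor; this is where I expect the bulk of the technical work to lie.
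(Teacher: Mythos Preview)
Your identity $|X| = B + \mathrm{OV} + \mathrm{MDO}$ is correct and is a clean reformulation; the paper implicitly derives the weaker inequality $|X|\ge k_2+k_n+\sum_i s_i$ (in its notation), which is essentially a subset of your $B$ term. However, everything after the identity is a plan rather than a proof, and the plan has a concrete gap.

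The step ``feeding into $\mathrm{MDO}$ once authorisation fails in the remaining $d-2$ directions'' does not work as stated. If a vertex $\bw$ lies outside $F_i\cup F_j$ but is forbidden, it must lie in $F_k$ for some $k\notin\{i,j\}$; yet $\deg_F(\bw)$ may perfectly well equal $1$, in which case $\bw$ contributes nothing to $\mathrm{MDO}$. Being authorised in two directions and forbidden in a single third one is the generic situation and gives no handle on $\mathrm{MDO}$. Nor does the corner argument force $B$ to be large: it yields only $O(2^d)$ boundary-touching dimers, far below $n^{d-2}$. So the slice strategy, as sketched, produces neither of the claimed contributions, and no mechanism is given that would force $B+\mathrm{OV}+\mathrm{MDO}\ge n^{d-2}/(20d^2)$.

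The paper proceeds quite differently after (implicitly) reaching your identity. It fixes a ``heavy'' direction carrying at least $n^d/(2d)$ dimers and slices $\bbQ^d_n$ into levels orthogonal to it. Writing $k_i$ for the number of vertical dimers between levels $i-1$ and $i$, it shows via an exact level-by-level count that $\bigl||k_{i+2}-k_{i+1}|-|k_i-k_{i-1}|\bigr|\le s_i+2dN_i$, where $s_i$ counts boundary-touching horizontal dimers on level $i$ and $N_i$ counts vertices on level $i$ that are authorised or multiply forbidden. Telescoping these bounds from the ends (where $k_1=k_{n+1}=0$) towards a level where $k_i\ge n^{d-1}/(2d)$ forces at least one of $\sum_i s_i$, $\sum_i N_i$, $k_2$, $k_n$ to be of order $n^{d-2}/d^2$, and each of these lower-bounds the authorised count. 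The crux is thus a discrete-derivative/telescoping estimate along one direction, not a two-dimensional slice average; your identity is compatible with this, but the inequality needs this additional idea.
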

\begin{proof}
By the pigeonhole principle, there exists $j\in [d]$ such that at least $n^d/(2d)$ of all dimers have endpoints which differ in their $j$-th coordinate. We assume that $j = d$ without loss of generality. For every $\bv\in\bbQ_n^d$, we define the \emph{level} of $\bv$ to be $v_d\in [n]$. For every $i\in [2,n]$, denote by $k_i$ the number of dimers $\bu\bv$ with $u_d = i-1$ and $v_d = i$. In particular,
\begin{equation}\label{eq:sum k-s}
\sum_{i=2}^n k_i \ge \frac{n^d}{2d}.
\end{equation}
For convenience of notation, we extend the sequence by setting $k_0 = k_1 = k_{n+1} = k_{n+2} = 0$.

For every $i\in [n]$, denote by $N_i$ the number of vertices on level $i$ that are either forbidden by at least two dimers or authorised. To begin with, we show that the sum of $(N_i)_{i=1}^n$ is at most twice the number of all authorised vertices. For every $j\in \{0,\ldots,2d\}$, denote by $F_j$ the number of vertices forbidden by exactly $j$ dimers. Since each of the $n^d/2$ dimers in the configuration forbids at most two vertices, we have
\[F_1 - 2F_0 + 2\sum_{i=1}^{n} N_i = F_1 + \sum_{j=2}^{2d} 2F_j\le \sum_{j=1}^{2d} jF_j\le n^d = \sum_{j=0}^{2d} F_j = F_1 + \sum_{i=1}^n N_i,\]
which implies that $\sum_{i=1}^{n} N_i\le 2F_0$. In particular, if $\sum_{i=1}^{n} N_i\ge n^{d-2}/(10d^2)$, then the proof is completed.

We focus on the analysis of the sequence $(N_i)_{i=1}^n$. Suppose that $\sum_{i=1}^{n} N_i < n^{d-2}/(10d^2)$. Observe that for every $i\in [n]$, every dimer $\bu\bv$ satisfying $u_d+1 = v_d\in \{i-1, i+2\}$ forbids one vertex on level $i$ but has no vertex on level $i$ itself. Conversely, every dimer $\bu\bv$ satisfying $u_d+1 = v_d\in \{i,i+1\}$ contains one vertex on level $i$ while forbidding none. Moreover, the dimers with two vertices on level $i$ forbid one or two vertices on that level. Let $s_i$ be the number of such dimers forbidding one vertex on level $i$, and $t_i$ be the number of such dimers forbidding two vertices on level $i$. Then, the total number of forbidden vertices on level $i$ counted with multiplicities (that is, a vertex forbidden by $j$ dimers is counted $j$ times) is $s_i + 2t_i + k_{i-1} + k_{i+2}$. Note that this expression may be rewritten as
\begin{align}
2(s_i+t_i) + k_{i-1} + k_{i+2} - s_i 
={}& (n^{d-1}-k_i-k_{i+1}) + k_{i-1} + k_{i+2} - s_i\nonumber\\
={}& n^{d-1} - (s_i-k_{i-1}+k_i+k_{i+1}-k_{i+2}).\label{eq:forbidden}
\end{align}
As a consequence, the total number of authorised vertices satisfies
\[F_0=n^d-\sum_{j=1}^{2d}F_j\ge n^d - \sum_{i=1}^n (n^{d-1} - (s_i-k_{i-1}+k_i+k_{i+1}-k_{i+2})) = k_2 + k_n+\sum_{i=1}^n s_i.\]
Hence, if $\sum_{i=1}^n s_i\ge n^{d-2}/(20d^2)$, then the proof is completed.

Suppose that $\sum_{i=1}^n s_i < n^{d-2}/(20d^2)$. Since the minimum of~\eqref{eq:forbidden} and $n^{d-1}$
is an upper bound on the number of vertices on level $i$ forbidden by exactly one dimer and no vertex is forbidden more than $2d$ times, we have that for every $i\in [n]$,
\[2dN_i \ge |s_i-k_{i-1}+k_i+k_{i+1}-k_{i+2}|.\]
In particular, combining this with the triangle inequality implies that for every $i\in [n]$,
\begin{equation}\label{eq:triangle ineq}
||k_{i+2}-k_{i+1}|-|k_i-k_{i-1}||\le |k_{i-1}-k_i-k_{i+1}+k_{i+2}|\le s_i + 2dN_i.
\end{equation}

Now, for every positive integer $m\le n/2$, summing~\eqref{eq:triangle ineq} for $i\in \{2,4,\ldots,2m-2\}$ and applying the triangle inequality yields
\begin{equation}\label{eq:sumi1}
||k_{2m} - k_{2m-1}| - k_2|=||k_{2m} - k_{2m-1}| - |k_2-k_1||  \le \sum_{i=1}^{m-1} (s_{2i}+2dN_{2i}) < \frac{3n^{d-2}}{10d},
\end{equation}
while summing over $i\in \{2m-1,2m+1,\ldots,n-1\}$ instead yields
\begin{equation}\label{eq:sumi2}
|k_n - |k_{2m-1}-k_{2m-2}||=||k_{n+1} - k_n| - |k_{2m-1}-k_{2m-2}||\le \sum_{i=m}^{n/2} (s_{2i-1}+2dN_{2i-1}) < \frac{3n^{d-2}}{10d}.
\end{equation}

To finish the proof, we show that $k_2\ge n^{d-2}/(10d)$ or $k_n\ge n^{d-2}/(10d)$. Indeed, in this case, there are at least $n^{d-2}/(10d)$ dimers that forbid only one vertex, which is a lower bound on the number of authorised vertices. Suppose for a contradiction that both $k_2 < n^{d-2}/(10d)$ and $k_n < n^{d-2}/(10d)$. Then, by~\eqref{eq:sumi1} and~\eqref{eq:sumi2} we have that for every $i\in [n]$, $|k_i - k_{i-1}|\le 4n^{d-2}/(10d)$.
At the same time, by~\eqref{eq:sum k-s} there is $i\in [2,n]$ such that $k_i\ge n^{d-1}/(2d)$. Thus, both $k_2$ and $k_n$ must be at least $n^{d-1}/(2d) - n\times4 n^{d-2}/(10d) = n^{d-2}/(10d)$, which leads to a contradiction and finishes the proof.
\end{proof}
\begin{proof}[Proof of Theorem~\ref{th:degree:size}]
By Lemma~\ref{lem:manyvert} there are at least $n^{d-2}/(20d^2)$ authorised vertices in $D$. Moreover, since for any vertex $\bv\in\bbQ^d_n$, the number of vertices at (graph) distance at most 4 from $\bv$ is at most $1+2d+2d(2d-1)+2d(2d-1)^2+2d(2d-1)^3 < (2d)^4$, we may find $n^{d-2}/(320d^6)$ authorised vertices at distance at least 5 from each other in $\bbQ^d_n$. Then, the alternating cycles in the second neighbourhoods of these vertices ensured by Lemma~\ref{lem:authorised} are disjoint, and therefore may be switched independently of each other, thus proving the desired result. 
\end{proof}
\begin{proof}[Proof of Theorem~\ref{th:degree:size:hypercube}]
Since for any vertex $\bv\in\bbQ^d$, the number of vertices at (graph) distance at most 4 from $\bv$ is at most $1+d+d(d-1)+d(d-1)^2+d(d-1)^3 < d^4$, 
we may find $2^d/d^4$ vertices at distance at least 5 from each other in $\bbQ^d$. Then, the alternating cycles in the second neighbourhoods of these vertices ensured by Lemma~\ref{lem:authorised} are disjoint, and therefore may be switched independently of each other, thus proving the desired result.
\end{proof}

\section{Ergodicity of the high-dimensional hypercube: proof of Theorem~\ref{th:ergodicity:hypercube}}

First, let us briefly outline the proof strategy. Let $Q_1$ (resp.\ $Q_2$) be the hypercube containing all vertices whose last coordinate is 1 (resp.\ 2), and call an edge (or a dimer) in $\bbQ^d$ \emph{crossing} if it contains one vertex in $Q_1$ and one vertex in $Q_2$. For any given $d\ge 2$, we fix a dimer configuration on $\bbQ^d$ and iteratively decrease the number of crossing edges until none are left, so that we can apply induction on $d$. At each step, the idea is to find an alternating cycle of length at most $4d-4$ in which all edges between $Q_1$ and $Q_2$ are dimers. To do this, we combine several technical lemmas related to the expansion properties of the hypercube with the analysis of a suitable exploration procedure of $Q_1$ and $Q_2$ along the alternating cycles in $\bbQ^d$ that do not use crossing edges which are not dimers.

\subsection{Preliminaries on isoperimetric inequalities in the hypercube}
\label{subsec:isoperimetry}
For a set $A\subseteq \bbQ^d$, we denote by $\partial A$ the set of vertices at (graph) distance 1 from $A$ in $\bbQ^d$. The next result is a version of a classical isoperimetric inequality of Harper~\cite{Harper66} (see also~\cites{Frankl81, Katona75, Keevash20}) which states that sets of given size with smallest vertex boundary are essentially balls.

\begin{theorem}[\cite{Harper66}, or also Lemma~5 and Theorem~2 in~\cite{Katona75}]\label{thm:Harper}
Fix positive integers $d$ and $a < 
2^d$. Then, there exists a unique choice of integers $1\le t\le k+1\le d$ and $t\le a_t<\dots<a_k<d$ such that 
\[a = \sum_{j=k+1}^d \binom{d}{j} + \sum_{i=t}^k \binom{a_i}{i}.\]
Moreover, 
\[\min\left\{|A\cup \partial A|: A\subseteq \bbQ^d
, |A|=a\right\} = \phi_d(a) := \sum_{j=k}^{d} \binom{d}{j} + \sum_{i=t}^k \binom{a_i}{i-1}.\]
{The same holds trivially with $\phi_d(0):=0$ and $\phi_d(2^d):=2^d$.}
\end{theorem}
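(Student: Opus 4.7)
The plan is to reduce the theorem to the Kruskal--Katona shadow inequality via a compression argument. Fix $0 < a < 2^d$; the cases $a \in \{0, 2^d\}$ are immediate.

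For the cascade representation, let $k$ be the largest integer with $\sum_{j=k+1}^d \binom{d}{j} \le a$, set $r := a - \sum_{j=k+1}^d \binom{d}{j}$, which lies in $[0, \binom{d}{k})$, and apply the greedy algorithm: at each step, pick $a_i$ maximal with $\binom{a_i}{i}$ not exceeding the current residue, subtract, and decrement the index by $1$, terminating at the first index $t$ where the residue becomes $0$. The chain $d > a_k > \dots > a_t \ge t$ is automatic, and uniqueness follows by a standard induction, since replacing any $a_i$ by a smaller value leaves a residue too large to be expressed using indices strictly below $i$.

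For attainment, identify $[2]^d$ with $2^{[d]}$, with the weight of a vertex equal to the corresponding subset's size, and define the \emph{simplicial order} $\prec$: $S \prec T$ iff $|S| > |T|$, or $|S| = |T|$ and $S$ precedes $T$ in the colex order on $\binom{[d]}{|S|}$. Let $I_a$ be the initial segment of size $a$ in this order; it contains every subset of size at least $k+1$ together with the $r$ colex-first subsets of size $k$, which we call $F$. Then $I_a \cup \partial I_a$ consists of every subset of size at least $k$, contributing $\sum_{j=k}^d \binom{d}{j}$, together with the lower shadow of $F$ in $\binom{[d]}{k-1}$. By Kruskal--Katona applied to the colex-initial family $F$ of size $r = \sum_{i=t}^k \binom{a_i}{i}$, this shadow has size exactly $\sum_{i=t}^k \binom{a_i}{i-1}$. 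Summing yields $|I_a \cup \partial I_a| = \phi_d(a)$.

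For the matching lower bound, use up-compression. For each $i \in [d]$, let $C_i$ act on $A \subseteq [2]^d$ by replacing, on each direction-$i$ edge $\{v, v + e_i\}$ containing exactly one vertex of $A$, the low-weight endpoint by the high-weight one. A direct edge-by-edge verification gives $|C_i(A)| = |A|$ and $|C_i(A) \cup \partial C_i(A)| \le |A \cup \partial A|$. Iterating to a fixed point yields an up-compressed $A^*$ of the same size, which must contain every vertex of weight at least $k+1$ together with some $F^* \subseteq \binom{[d]}{k}$ of size $r$. A final intra-layer shift of $F^*$ to the colex-initial family $F$ weakly shrinks $|A^* \cup \partial A^*|$: the upward neighbors of $F^*$ already lie in $A^*$ and so are unaffected, while the lower shadow weakly shrinks by Kruskal--Katona. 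Thus $|A \cup \partial A| \ge |I_a \cup \partial I_a| = \phi_d(a)$.

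The main obstacle is verifying that the compression steps weakly shrink the closed neighborhood. For $C_i$ this requires a case analysis over each direction-$i$ edge, tracking the contributions of endpoints and of the remaining $d-1$ neighbors of each endpoint; for the final intra-layer step it reduces to the Kruskal--Katona theorem itself. A fully self-contained argument typically fuses both compression types into a single ``simplicial compression'' performed in parallel, which is the route taken by most modern textbook treatments.
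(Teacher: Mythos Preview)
The paper does not prove this statement: Theorem~\ref{thm:Harper} is quoted as a classical result, attributed to Harper and to Katona, and used as a black box in the proof of Corollary~\ref{cor:a_2d}. There is therefore no ``paper's own proof'' to compare your attempt against.

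That said, your sketch has a genuine gap worth flagging. After iterating the coordinate compressions $C_i$ to a fixed point, the resulting set $A^*$ is indeed an up-set (a filter in the subset lattice), but an up-set of size $a$ need \emph{not} consist of all vertices of weight at least $k+1$ together with some family $F^*\subseteq\binom{[d]}{k}$. For instance, in $\bbQ^4$ the set $\{1111,1110,1101,1100\}$ is an up-set of size $4$ that contains a weight-$2$ vertex while omitting two weight-$3$ vertices. So the sentence ``which must contain every vertex of weight at least $k+1$ together with some $F^*\subseteq\binom{[d]}{k}$ of size $r$'' is false as written, and the subsequent intra-layer Kruskal--Katona step cannot be applied directly.

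You evidently sense this, since your final paragraph concedes that a self-contained argument requires the stronger $UV$-compressions (``simplicial compressions''). But then the body of the proof should use those compressions from the outset rather than the coordinate $C_i$, or else one needs an additional argument (e.g.\ a further round of compressions that push mass from lower layers upward while preserving the closed-neighbourhood bound) to reduce an arbitrary up-set to one concentrated on consecutive layers. As it stands, the proof is a correct outline of the standard strategy but with a step that does not go through.
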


We will show the following technical property of the function $\phi_d$.

\begin{lemma}\label{lem:technical}
Fix $d\ge 1$ and $k\in [0,d]$, and let $\phi = \phi_{d}$ from Theorem~\ref{thm:Harper}. Let $l_1,l_2\in[2^d]$ and $l=l_1+l_2$.
\begin{enumerate}
    \item If $l\le 2^{d}$, then $\phi(l_1)+\phi(l_2)\ge \phi(l)$.
    \item If $l > 2^{d}$, then $\phi(l_1)+\phi(l_2)\ge 2^{d} + \phi(l-2^{d})$.
\end{enumerate}
\end{lemma}
\begin{proof}
We prove both statements simultaneously. If some of $l_1$ and $l_2$ is $2^d$, there is nothing to prove. Without loss of generality, suppose that $1\le l_1\le l_2\le 2^{d}-1$. Let $X_1$ and $X_2$ be two subsets of $\bbQ^{d}$ with $|X_1| = l_1$, $|X_2| = l_2$, $|X_1\cup \partial X_1| = \phi(l_1)$, $|X_2\cup \partial X_2| = \phi(l_2)$ and $X_1\not\subseteq X_2$ (the last can be ensured by applying a suitable isomorphism of $\bbQ^d$ to $X_1$, if needed). Define $Y_1 = X_1\cup X_2$ and $Y_2 = X_1\cap X_2$. We claim that
\begin{equation}\label{eq:shift}
|Y_1\cup \partial Y_1|+|Y_2\cup \partial Y_2|\le |X_1\cup \partial X_1|+|X_2\cup \partial X_2|.    
\end{equation}
Indeed, let $\bw\in Y_1\cup \partial Y_1$. Then, $\bw$ belongs to at least one of $X_1\cup \partial X_1$ and $X_2\cup \partial X_2$. Moreover, if $\bw$ also belongs to $Y_2\cup \partial Y_2$, then $\bw$ belongs to both $X_1\cup \partial X_1$ and $X_2\cup \partial X_2$, thus showing~\eqref{eq:shift} after summation over all vertices in $\bbQ^{d}$.

In particular, this shows that, for every pair of integers $l_1,l_2$ satisfying $1\le l_1\le l_2\le 2^{d}-1$, there are integers $m_1\in [l_2+1,2^d]$ and $m_2\in [0,l_1-1]$ with $m_1+m_2 = l_1+l_2$ such that $\phi(m_1)+\phi(m_2) \le \phi(l_1)+\phi(l_2)$. Iterating this observation finishes the proof of both points.
\end{proof}

In its present form, Theorem~\ref{thm:Harper} is not useful for our purposes. Indeed, after a certain number of steps along an alternating path ending with a dimer, we wish to be able to reach many new vertices with a non-dimer edge. For this reason, we are particularly interested in the size of the vertex boundary $\partial A$ rather than the size of the closed neighbourhood $A\cup\partial A$ and only need to consider sets $A$ with the correct parity.
Call a vertex in $\bbQ^d$ \emph{even} if the sum of its coordinates is even, and \emph{odd} otherwise. We also say that a subset of vertices of $\bbQ^d$ is even (resp.\ odd) if it contains only even (resp.\ odd) vertices. 
The next lemma provides a lower bound for the size of the vertex boundary $\partial A$ of an even set $A\subseteq \bbQ^d$ in terms of the size
of the closed neighbourhood $B\cup \partial B$ for arbitrary sets $B\subseteq \bbQ^{d-1}$ of the same size as $A$, to which Theorem~\ref{thm:Harper} can be applied.

\begin{lemma}\label{lem:even}
Fix an integer $d\ge 2$ and an even set $A\subseteq \bbQ^d$. Then, 
\[|\partial A|\ge \min\left\{|B\cup \partial B|: B\subseteq \bbQ^{d-1},\, |B| = |A|\right\}.\]
\end{lemma}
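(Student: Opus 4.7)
The plan is to exploit the bipartite parity structure of $\bbQ^d=\{1,2\}^d$. Call the projection $\pi\colon\bbQ^d\to\bbQ^{d-1}$ that drops the last coordinate. Since the last coordinate of an even vertex is determined by the parities of the first $d-1$, the restriction of $\pi$ to the even vertices of $\bbQ^d$ is a bijection onto $\bbQ^{d-1}$, and the same holds for the odd vertices. The hope is that $A$ and $\partial A$ push forward under $\pi$ to a set and its closed neighbourhood in $\bbQ^{d-1}$, which is exactly the quantity on the right-hand side of the claim.

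First I would set $A':=\pi(A)\subseteq\bbQ^{d-1}$. Since $A$ is even, $\pi$ is injective on $A$, so $|A'|=|A|$. Since every neighbour of an even vertex is odd, $\partial A$ consists entirely of odd vertices, and again by injectivity of $\pi$ on odd vertices, $|\partial A|=|\pi(\partial A)|$.

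The key identification is $\pi(\partial A)=A'\cup\partial_{\bbQ^{d-1}}A'$. For $\bv\in A$, the $d$ coordinate-flip neighbours $\bv^{(i)}$ all lie in $\partial A$ (they are odd, hence outside $A$). For $i<d$, $\pi(\bv^{(i)})$ is the coordinate-$i$ neighbour of $\pi(\bv)$ in $\bbQ^{d-1}$; for $i=d$, $\pi(\bv^{(d)})=\pi(\bv)\in A'$. Running this in reverse hits every element of $A'\cup\partial A'$: the points of $A'$ arise from the $i=d$ flips, and each $\bw\in\partial A'$, adjacent to some $\bv'\in A'$ via coordinate $i<d$, equals $\pi(\bv^{(i)})$ for the unique even $\bv$ with $\pi(\bv)=\bv'$. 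Because $A'\cap\partial A'=\emptyset$ by the convention that $\partial$ excludes the set itself, we conclude
\[|\partial A|=|A'\cup\partial A'|=|A'|+|\partial A'|\ge\min\{|B\cup\partial B|:B\subseteq\bbQ^{d-1},\ |B|=|A|\}.\]

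I do not expect any genuine obstacle: once the parity-to-projection dictionary is set up, the identity $\pi(\partial A)=A'\cup\partial A'$ is pure bookkeeping. The interest of the lemma lies not in its proof but in what it enables, namely importing the Harper-type isoperimetric bound of Theorem~\ref{thm:Harper} on $\bbQ^{d-1}$ to the one-sided (even) setting in $\bbQ^d$ needed for the ergodicity argument.
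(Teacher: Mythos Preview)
Your proof is correct and follows essentially the same route as the paper: project via $\pi$ dropping the last coordinate, use that $\pi$ is a bijection on each parity class, and identify $\pi(\partial A)$ with $\pi(A)\cup\partial\pi(A)$. The only cosmetic difference is that you establish the equality $|\partial A|=|A'\cup\partial A'|$ (and even split this as $|A'|+|\partial A'|$), whereas the paper contents itself with the inequality $|\partial A|\ge|\pi(A)\cup\partial\pi(A)|$, checking only the inclusion $\pi(A)\cup\partial\pi(A)\subseteq\pi(\partial A)$; your reverse inclusion is correct but unnecessary for the lemma.
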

\begin{proof}
Define the map $\pi: \bbQ^d\to\bbQ^{d-1}:(v_1, \ldots, v_d)\mapsto (v_1, \ldots, v_{d-1})$ and consider a vertex $\bu$ in the first neighbourhood of $\pi(A)$ in $\bbQ^{d-1}$. If $\bu\in \pi(A)$, then there are two distinct vertices $\bu^1, \bu^2\in \bbQ^d$ with $\pi(\bu^1) = \pi(\bu^2) = \bu$ and $\bu^2\in A$. Thus, $\bu$ is the image of the odd vertex $\bu^1\in \partial A$. Moreover, if $\bu\in \partial\, \pi(A)$, then there is a vertex $\bv\in A$ such that $\pi(\bv)$ is a neighbour of $\bu$. Thus, $\bu$ is the image of a neighbour of $\bv$ under $\pi$, which is an odd vertex in $\partial A$. Combining the two observations above shows that $|\partial A| \ge |\pi(A)\cup \partial\, \pi(A)|$, which finishes the proof.
\end{proof}

\subsection{Proof of Theorem~\ref{th:ergodicity:hypercube}}
Recall that $Q_1$ and $Q_2$ are the two halves of $\bbQ^d$ with last coordinates $1$ and $2$ respectively. Fix a dimer configuration $D$ in $\bbQ^d$, and suppose that $\bv$ is an even vertex in $Q_1$ contained in a crossing dimer. We define the graph $\Gamma$ with vertex set $[2]^d$ and edge set $E(Q_1)\cup E(Q_2)\cup D$ where $E(G)$ denotes the edge set of a graph $G$. In particular, $D$ may naturally be seen as a dimer configuration on $\Gamma$. 

For every integer $k\ge 0$, denote by $E_k$ (resp.\ $O_k$) the set of even (resp.\ odd) vertices that may be reached from $\bv$ by following an alternating path in $\Gamma$ of length at most $2k$ (resp.\ $2k+1$) starting with a non-dimer edge. In particular, $E_0 = \{\bv\}$ and $O_0 = Q_1\cap\partial\{\bv\}$. Also, we define $E_{k,i} = E_k\cap Q_i$ for $i\in[2]$. Note that for every integer $k\ge 0$,
\begin{equation}\label{eq:Es}
|E_{k+1}|\ge |O_k|\ge |\partial E_{k,1}\cap Q_1| + |\partial E_{k,2}\cap Q_2|.    
\end{equation}

We next aim to recursively prove a lower bound on $|E_k|$ as a function of $k$.
Define the sequences $(a_k)_{k\ge 0}$ and $(b_k)_{k\ge 0}$ by setting $a_0 = 1$, $b_0 = 2^{d-2}+1$ and, for every integer $k\ge 0$, $a_{k+1}$ (resp.\ $b_{k+1}$) is the minimum of $|\partial X_1\cap Q_1|+|\partial X_2\cap Q_2|$ over all couples 
of even sets $(X_1,X_2)$ such that $X_1\subseteq Q_1$, $X_2\subseteq Q_2$ and $|X_1|+|X_2| = a_k$ (resp.\ $|X_1|+|X_2| = b_k$). Note that by~\eqref{eq:Es}, for every integer $k\ge 0$, $|E_k|\ge a_k$, and if $|E_m|\ge b_0$ for some $m$, then for every integer $k\ge 0$, $|E_{k+m}|\ge b_k$. Before we use these lower bounds on $|E_k|$, let us first use the preliminaries from Section~\ref{subsec:isoperimetry} to prove that their last terms are large enough.

\begin{lemma}\label{cor:a_2d}
We have $a_{d-2} = 2^{d-2}$ and $b_{d-2} = 2^{d-1}$.
\end{lemma}
\begin{proof}
If $d=2$, the statement is trivial. Suppose that $d\ge 3$. We show by induction that for every integer $k\in [0,d-2]$, $\min(a_k,b_k-2^{d-2})\ge \sum_{j=d-2-k}^{d-2} \tbinom{d-2}{j}$. The proof for $b_{k}-2^{d-2}$ being identical, we only prove the inequality for $a_{k}$. The statement is trivially satisfied for $k=0$. Suppose that for some $k\in [d-2]$, the statement holds for $k-1$. Fix even sets $X_1\subseteq Q_1$ and $X_2\subseteq Q_2$ of size resp.\ $l_1$ and $l_2$ such that $l = l_1+l_2=\sum_{j=d-1-k}^{d-2} \tbinom{d-2}{j}\le a_{k-1}$.
Denoting $\phi = \phi_{d-2}$ from Theorem~\ref{thm:Harper}, and applying Theorem~\ref{thm:Harper} with $d-2$ instead of $d$ and Lemma~\ref{lem:even} with $d-1$ instead of $d$, we obtain that
\begin{equation}\label{eq:ineqs}
|\partial X_1\cap Q_1|\ge \phi(l_1)\quad \text{and}\quad |\partial X_2\cap Q_2|\ge \phi(l_2).
\end{equation}
Thus, Lemma~\ref{lem:technical} shows that 
\[|\partial X_1\cap Q_1| + |\partial X_2\cap Q_2|\ge \phi(l_1)+\phi(l_2)\ge \phi(l)=\sum_{j=d-2-k}^{d-2}\binom{d-2}{j},\] which completes the induction.
\end{proof}

\begin{proof}[Proof of Theorem~\ref{th:ergodicity:hypercube}]
Fix an integer $d\ge 2$ and any dimer configuration $D$ on $\bbQ^d$. We show that by switching along alternating cycles of length at most $4d-4$, we can reach the configuration where every dimer $\bu\bv$ satisfies that $u_1\neq v_1$, that is, all dimers are parallel to the first dimension (note that only one dimer configuration has this property, so this fact directly implies the claimed connectivity of $\cD_{2d-2}(\bbQ^d)$).

We show this by induction on the dimension. The base case is clear. Fix an integer $d\ge 3$ and suppose that the statement holds for $d-1$. If $D$ contains no crossing dimers, then it consists of a dimer configuration on $Q_1$ and a dimer configuration on $Q_2$. Then, the conclusion follows from the induction hypothesis for $d - 1$. 

Now, suppose that $D$ contains a crossing dimer. Fix an even vertex $\bv\in Q_1$ contained in a crossing dimer. We show that $|E_{2d-3}| = 2^{d-1}$. 
Note that, by Lemma~\ref{cor:a_2d}, $E_{d-2}$ has size at least $2^{d-2}$. If the inequality is strict, then $|E_{d-2}|\ge b_0$ and consequently $|E_{2(d-2)}|\ge b_{d-2} = 2^{d-1}$.

Suppose that $|E_{d-2}| = 2^{d-2}$. We show that $|O_{d-2}|\ge 2^{d-2}+1$.
If $E_{d-2,1}$ and $E_{d-2,2}$ are both non-empty, we show that the sizes of the vertex boundaries of both $E_{d-2,1}$ in $Q_1$ and of $E_{d-2,2}$ in $Q_2$ must be resp.\ (strictly) larger than $E_{d-2,1}$ and $E_{d-2,2}$. Indeed, for $i\in[2]$, the number of edges between $E_{d-2,i}$ and its vertex boundary (in $Q_i$) is $(d-1)|E_{d-2,i}|$, and at the same time this number is at most $(d-1)|\partial E_{d-2,i}\cap Q_i|$. Moreover, equality holds only if $\partial E_{d-2,i}\cap Q_i$ is not adjacent to any vertex outside $E_{d-2,i}$ in $Q_i$, which may only happen when $E_{d-2,i}$ contains all even vertices in $Q_i$, which in our case shows that $|\partial E_{d-2,i}\cap Q_i| > |E_{d-2,i}|$. 
Now, if $E_{d-2,2} = \varnothing$, say, then $E_{d-2}$ must contain all even 
vertices in $Q_1$. However, as there is a crossing dimer with an even vertex in $Q_1$, there is also one with an odd vertex in $Q_1$ and therefore in $O_{d-2}$. Hence, $|E_{d-1,1}|\ge |E_{d-2,1}| = 2^{d-2}$ and $E_{d-1,2}\neq\varnothing$, so $|E_{d-1}|\ge 2^{d-2}+1$ and an application of Lemma~\ref{cor:a_2d} shows that $|E_{2d-3}|\ge b_{d-2} = 2^{d-1}$.

Let $\bv'$ be the odd vertex of the crossing dimer containing $\bv$. Since $E_{2d-3}$ contains all even vertices in $\bbQ^d$, it contains a neighbour of $\bv'$ in $Q_2$, so $\bv'\in O_{2d-3}$. Hence, there is an alternating cycle in $\Gamma$ of length at most $4d-4$ containing the dimer $\bv\bv'$, whose switching decreases the number of crossing dimers. Iterating the above approach leaves no crossing dimers eventually, and thus finishes the proof of the connectivity of $\cD_{2d-2}(\bbQ^d)$. 

To show the bound on the diameter, note that every switching decreases the number of crossing dimers by at least two, so $2^{d-2}$ steps are sufficient to make all crossing dimers (in a fixed dimension) disappear. Since the induction above consists of $d-1$ steps, the distance (in $\mathcal D_{2d-2}(\bbQ^d)$) from any dimer configuration $D$ to the configuration where all dimers are parallel to the first dimension is at most $(d-1)2^{d-2}$, and therefore the diameter of $\mathcal D_{2d-2}(\bbQ^d)$ is at most twice as large.
\end{proof}

\section{Diameter lower bound: proof of Theorem~\ref{th:diameter}}
For this section we fix $n$ even and $\ell,d\ge 2$.
As above, we call a site $\bv\in\bbQ_n^d$ \emph{even} if $\sum_{i\in[d]}v_i$ is even and \emph{odd} otherwise. Given a dimer configuration $D$, we define a colouring of the vertices of $\bbQ_n^d$ in two colours (red and blue) as follows. Let $\bu\bv\in D$ be a dimer with $\bu$ odd and $\bv$ even. Let $i\in[d]$ be such that $|u_i-v_i|=1$. We colour both $\bu$ and $\bv$ \emph{red} if $u_i-v_i=1$, and \emph{blue} if $u_i-v_i=-1$.

\begin{example}[Pyramid configuration]
\label{ex:pyramids}
As an example, let us consider the \emph{pyramid configuration} of Figure~\ref{fig:pyramids} defined formally as follows. For each $\bv'\in\bbQ_{n}^{d-2}$ and even $\bv\in\bbQ_n^d$ such that $\bv=(v_1,v_2,\bv')$, the second vertex in the dimer of $\bv$ is:
\[\begin{cases}
(v_1+1,v_2,\bv')&\text{if }v_1<v_2\text{ and }v_1\ge n+1-v_2,\\
(v_1,v_2+1,\bv')&\text{if }v_1\le v_2\text{ and }v_1< n+1-v_2,\\
(v_1-1,v_2,\bv')&\text{if }v_1>v_2\text{ and }v_1\le n+1-v_2,\\
(v_1,v_2-1,\bv')&\text{if }v_1\ge v_2\text{ and }v_1>n+1-v_2.
\end{cases}\]
In terms of colouring, the dimers corresponding to the former two cases are red, while the remaining ones are blue. In particular, all sites $\bu\in\bbQ_n^d$ with $u_1<u_2$ are red, while those with $u_1>u_2$ are blue.
\end{example}
Theorem~\ref{th:diameter} will follow easily from Example~\ref{ex:pyramids} and the following observation.
\begin{lemma}
\label{lem:colours}
For any dimer configuration on $\bbQ_n^d$ there are exactly $n^d/2$ red vertices.\end{lemma}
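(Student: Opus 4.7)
The plan hinges on the observation that the coloring rule is constant on each dimer: if $\bu\bv\in D$ has $\bu$ odd and $\bv$ even, then by definition both endpoints are assigned the same color. Consequently the number of red vertices equals twice the number of red dimers, and since there are $n^d/2$ dimers in total, it will suffice to show that exactly $n^d/4$ of them are red, or equivalently, that red and blue dimers are equal in number.

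I would handle one direction at a time. Fix $i\in[d]$; unpacking the rule, a dimer parallel to $e_i$ is red if and only if its endpoint with the smaller $i$-coordinate is even. For each level $k\in[n-1]$, consider the slab $A_k=\{\bv\in\bbQ_n^d:v_i\le k\}$. Since $n$ is even and $d\ge 2$, a parity-swapping involution on any coordinate different from $i$ shows that each hyperplane $\{v_i=j\}$ contains exactly $n^{d-1}/2$ even and $n^{d-1}/2$ odd vertices, so $A_k$ has balanced even/odd counts. Every dimer lying entirely inside $A_k$ (or entirely outside it) pairs an even with an odd and thus preserves this balance; the only dimers that could disturb it are those straddling the interface between $\{v_i=k\}$ and $\{v_i=k+1\}$, which are precisely the direction-$i$ dimers at this level. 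For the balance in $A_k$ to be maintained, these crossings must split evenly between those with even lower endpoint (which are red) and those with odd lower endpoint (which are blue).

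Summing this per-level equality over $k\in[n-1]$ counts each direction-$i$ dimer exactly once and yields equality of red and blue direction-$i$ dimers. Summing further over $i\in[d]$ produces the global equality, so there are $n^d/4$ red dimers and hence $n^d/2$ red vertices. I do not anticipate any substantive obstacle: the only points requiring care are verifying the balanced parity structure of each hyperplane (which relies on $n$ being even and $d\ge 2$) and matching the slab-balance dichotomy to the red/blue rule, both of which are routine. An equally clean alternative would be to orient each dimer from its even to its odd endpoint and show that $\sum_{\text{dimers}}(\bu-\bv)=\sum_{\bw\text{ odd}}\bw-\sum_{\bw\text{ even}}\bw$ vanishes coordinate by coordinate via the same involution, but I prefer the slab formulation as it is more geometrically transparent.
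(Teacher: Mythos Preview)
Your proposal is correct and follows essentially the same approach as the paper: fix a direction $i$ and a level $j$, use the balanced even/odd count in the slab $\{w_i\le j\}$ (which holds since $n$ is even and $d\ge 2$) to deduce that the dimers crossing that level split evenly between those with even lower endpoint (red) and odd lower endpoint (blue), and then sum over levels and directions. The paper's proof is just a terser rendition of the same slab-balance argument.
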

\begin{proof}
Fix $i\in[d]$ and $j\in[n]$. Consider the dimers $\bu\bv$ such that $u_i=j$ and $v_i=j+1$. Since the numbers of even and odd sites $\bw$ with $w_i\le j$ are equal, the number of such dimers with $\bu$ even is equal to the number of such dimers with $\bu$ odd. Since these two types of dimers have different colours (red and blue, respectively) and each dimer is considered for exactly one choice of $(i,j)$, the result follows.
\end{proof}
\begin{proof}[Proof of Theorem~\ref{th:diameter}]
It is clear that switching a cycle cannot modify the colours of vertices outside the cycle. Therefore, by Lemma~\ref{lem:colours}, switching any cycle can only alter the colours of the sites within the cycle without changing the amount of sites of either colour. Consider the pyramid dimer configuration $D$ from Example~\ref{ex:pyramids} and its inverse $\bar D$ obtained by using the configuration in Figure~\ref{subfig:left} on even sections and Figure~\ref{subfig:right} on odd ones instead, which also leads to exchanging the two colours. Thus, all sites $\bu\in\bbQ_n^d$ with $u_1<u_2$ are red in $D$ and blue in $\bar D$. Among these sites, for each $i\in[n-1]$, there are $(n-i)n^{d-2}$ at distance $i$ from the complement of this set. Since each switching moves at most $\ell$ red sites at graph distance at most $\ell$, in order to reach $\bar D$ from $D$, we need to switch at least 
\[\frac{n^{d-2}}{\ell^2}\sum_{i=1}^{n-1}(n-i)i=\frac{n^{d-1}(n^2-1)}{6\ell^2}\]
alternating cycles.
\end{proof}

\section{Ergodicity on \texorpdfstring{$\bbT_{m,n}$}{Tmn}: proof of Theorem~\ref{th:triangular}}

In this section, we fix positive integers $m,n$ with $m$ even. The proof of Theorem~\ref{th:triangular} proceeds by induction by showing that, starting with any dimer configuration on $\bbT_{m,n}$, we can make all dimers on the lower boundary horizontal. Thereby, one can directly apply the induction hypothesis to the remaining box $(0,1)+\bbT_{m,n-1}$. The base of the induction consists of the cases $n\in\{1,2\}$. The first one is trivial (there is one dimer configuration), and the second one boils down to switching 4-cycles on $\bbQ_{(m,2)}^2$ (which is easily seen to be ergodic with diameter $m-1$) due to absence of diagonal dimers. We may therefore assume that $n\ge 3$.

\begin{figure}
    \centering
\begin{tikzpicture}[x=1cm,y=1cm]
    \draw[gray,very thin] (2,1) grid (5,3);
    \draw[gray,very thin] (1,2)--(1,3)--(2,3);
    \draw[gray,very thin] (2,1)--(1,2);
    \draw[gray,very thin] (3,1)--(1,3);
    \draw[gray,very thin] (4,1)--(2,3);
    \draw[gray,very thin] (5,1)--(3,3);
    \draw[gray,very thin] (5,2)--(4,3);
    \draw[very thick] (1,1)--(1,2) node[above right] {$\bx$};
    \draw[very thick](2,2)--(2,1) node[above right]{$\by$};
    \draw[very thick,blue,dashed] (2,2)--(1,2);
    \draw[very thick,blue,dashed] (1,1)--(2,1);
\end{tikzpicture}
\quad
\begin{tikzpicture}[x=1cm,y=1cm]
    \draw[gray,very thin] (1,2) grid (5,3);
    \draw[gray,very thin] (2,1)--(1,1)--(1,2);
    \draw[gray,very thin] (3,1)--(5,1)--(5,2);
    \draw[gray,very thin] (4,1)--(4,2);
    \draw[gray,very thin] (2,1)--(1,2);
    \draw[gray,very thin] (3,1)--(1,3);
    \draw[gray,very thin] (4,1)--(2,3);
    \draw[gray,very thin] (5,1)--(3,3);
    \draw[gray,very thin] (5,2)--(4,3);
    \draw[very thick] (1,1)--(1,2) node[above right] {$\bx$};
    \draw[very thick](2,1)--(3,1);
    \draw[very thick](2,2)--(3,2) node[above right]{$\by$};
    \draw[very thick,blue,dashed] (2,1)--(2,2);
    \draw[very thick,blue,dashed] (3,1)--(3,2);
\end{tikzpicture}
\quad
\begin{tikzpicture}[x=1cm,y=1cm]
    \draw[gray,very thin] (2,1) grid (5,3);
    \draw[gray,very thin] (1,3)--(2,3);
    \draw[gray,very thin] (1,2)--(2,2);
    \draw[gray,very thin] (2,1)--(1,2);
    \draw[gray,very thin] (4,1)--(2,3);
    \draw[gray,very thin] (5,1)--(3,3);
    \draw[gray,very thin] (5,2)--(4,3);
    \draw[very thick] (1,1)--(1,2) node[above right] {$\bx$};
    \draw[very thick](2,1)--(3,1);
    \draw[very thick](2,2)--(1,3) node[above right]{$\by$};
    \draw[very thick,blue,dashed] (1,1)--(2,1);
    \draw[very thick,blue,dashed] (3,1)--(2,2);
    \draw[very thick,blue,dashed] (1,3)--(1,2);
\end{tikzpicture}
\\
\vspace{1em}
\begin{tikzpicture}[x=1cm,y=1cm]
    \draw[gray,very thin] (1,1) grid (2,3);
    \draw[gray,very thin] (3,1) grid (5,3);
    \draw[gray,very thin] (2,1)--(3,1);
    \draw[gray,very thin] (2,1)--(1,2);
    \draw[gray,very thin] (3,1)--(1,3);
    \draw[gray,very thin] (4,1)--(2,3);
    \draw[gray,very thin] (5,1)--(3,3);
    \draw[gray,very thin] (5,2)--(4,3);
    \draw[very thick] (1,1)--(1,2) node[above right] {$\bx$};
    \draw[very thick](2,1)--(3,1);
    \draw[very thick](2,2)--(2,3) node[above right]{$\by$};
    \draw[very thick](3,2)--(3,3) node[above right]{$\bz$};
    \draw[very thick,blue,dashed] (2,2)--(3,2);
    \draw[very thick,blue,dashed] (2,3)--(3,3);
\end{tikzpicture}
\quad
\begin{tikzpicture}[x=1cm,y=1cm]
    \draw[gray,very thin] (1,2) grid (5,3);
    \draw[gray,very thin] (4,1) grid (5,2);
    \draw[gray,very thin] (2,1)--(1,1)--(1,3)--(2,3);
    \draw[gray,very thin] (3,1)--(3,2);
    \draw[gray,very thin] (2,1)--(1,2);
    \draw[gray,very thin] (3,1)--(1,3);
    \draw[gray,very thin] (5,1)--(3,3);
    \draw[gray,very thin] (5,2)--(4,3);
    \draw[very thick] (1,1)--(1,2) node[above right] {$\bx$};
    \draw[very thick](2,1)--(3,1);
    \draw[very thick](2,2)--(2,3) node[above right]{$\by$};
    \draw[very thick](3,2)--(4,1) node[above right]{$\bz$};
    \draw[very thick,blue,dashed] (2,1)--(2,2);
    \draw[very thick,blue,dashed] (3,1)--(4,1);
    \draw[very thick,blue,dashed] (3,2)--(2,3);
\end{tikzpicture}
\quad
\begin{tikzpicture}[x=1cm,y=1cm]
    \draw[gray,very thin] (1,1) grid (5,3);
    \draw[gray,very thin] (2,1)--(1,2);
    \draw[gray,very thin] (3,1)--(1,3);
    \draw[gray,very thin] (3,2)--(2,3);
    \draw[gray,very thin] (4,2)--(3,3);
    \draw[gray,very thin] (5,2)--(4,3);
    \draw[very thick] (1,1)--(1,2) node[above right] {$\bx$};
    \draw[very thick](2,1)--(3,1);
    \draw[very thick](2,2)--(2,3) node[above right]{$\by$};
    \draw[very thick](3,2)--(4,2) node[above right]{$\bz$};
    \draw[very thick](4,1)--(5,1);
    \draw[very thick,blue,dashed] (4,1)--(3,2);
    \draw[very thick,blue,dashed] (5,1)--(4,2);
\end{tikzpicture}
    \caption{Illustration of Case 1 of the proof of Theorem~\ref{th:triangular}.}
    \label{fig:case1}
\end{figure}
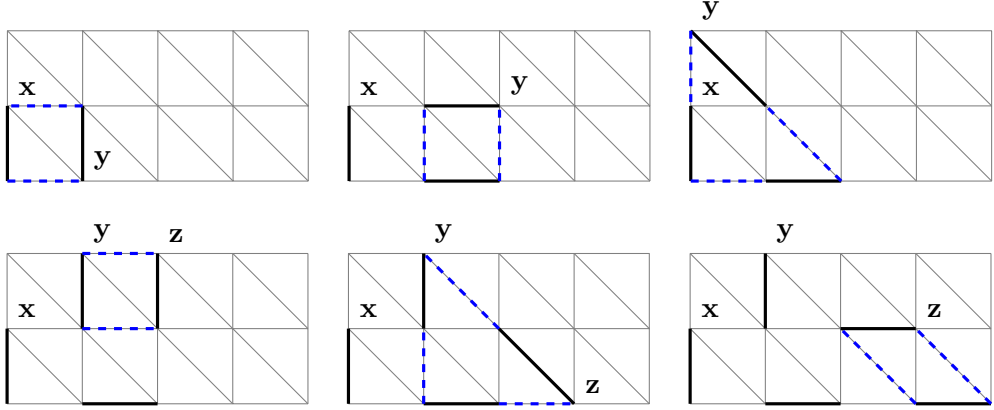

We proceed by a further induction. Let $x\in[m/2]$ be maximal such that $(2y-1,1),(2y,1)$ is a dimer in the present configuration for all $y\in[x]$. If $x=m/2$, we are done. Otherwise, we consider two cases for the other end $\bx$ of the dimer containing $(2x+1,1)$.

\paragraph{Case 1} Assume $\bx=(2x+1,2)$ (see Figure \ref{fig:case1} for an illustration). Let $\by$ be the other end of the dimer of $(2x+2,2)$. If $\by=(2x+2,1)$, we are done after switching the 4-cycle $(2x+1,1),(2x+1,2),(2x+2,2),(2x+2,1)$. We assume this is not the case, so necessarily $(2x+2,1),(2x+3,1)$ is a dimer. If $\by=(2x+3,2)$, then switching the 4-cycle $(2x+2,1),(2x+3,1),(2x+3,2),(2x+2,2)$ brings us to the previous case, so we are done. If $\by=(2x+1,3)$, then we are done by switching the 6-cycle $(2x+1,1),(2x+2,1),(2x+3,1),(2x+2,2),(2x+1,3),(2x+1,2)$. 

We may therefore assume that $\by=(2x+2,3)$, and note that it suffices to move this dimer from $(2x+2,2),(2x+2,3)$ since all other cases were already dealt with. Let $\bz$ be the other end of the dimer of $(2x+3,2)$. If $\bz=(2x+3,3)$, it suffices to switch the 4-cycle $(2x+2,2),(2x+3,2),(2x+3,3),(2x+2,3)$. If $\bz=(2x+4,1)$, it suffices to switch the 6-cycle $(2x+2,1),(2x+3,1),(2x+4,1),(2x+3,2),(2x+2,3),(2x+2,2)$. Finally, it remains to consider the case $\bz=(2x+4,2)$, which entails that $(2x+4,1),(2x+5,1)$ is a dimer. Then, we can switch the 4-cycle $(2x+4,1),(2x+5,1),(2x+4,2),(2x+3,2)$, returning to the previous case.

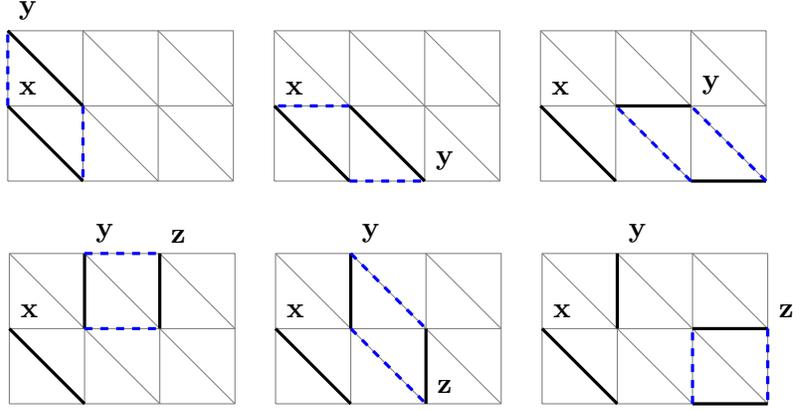
\begin{figure}
    \centering
\begin{tikzpicture}[x=1cm,y=1cm]
    \draw[gray,very thin] (1,2) grid (3,3);
    \draw[gray,very thin] (1,2)--(0,2)--(0,1)--(3,1)--(3,2);
    \draw[gray,very thin] (2,1)--(2,2);
    \draw[gray,very thin] (0,3)--(1,3);
    \draw[gray,very thin] (2,1)--(0,3);
    \draw[gray,very thin] (3,1)--(1,3);
    \draw[gray,very thin] (3,2)--(2,3);
    \draw[gray,very thin] (1,1)--(0,2);
    \draw[very thick] (1,1)--(0,2) node[above right] {$\bx$};
    \draw[very thick](1,2)--(0,3) node[above right]{$\by$};
    \draw[very thick,blue,dashed] (1,2)--(1,1);
    \draw[very thick,blue,dashed] (0,2)--(0,3);
\end{tikzpicture}
\quad
\begin{tikzpicture}[x=1cm,y=1cm]
    \draw[gray,very thin] (1,2) grid (3,3);
    \draw[gray,very thin] (2,1) grid (3,2);
    \draw[gray,very thin] (1,3)--(0,3)--(0,1)--(1,1)--(1,2);
    \draw[gray,very thin] (2,1)--(0,3);
    \draw[gray,very thin] (3,1)--(1,3);
    \draw[gray,very thin] (3,2)--(2,3);
    \draw[gray,very thin] (1,1)--(0,2);
    \draw[very thick] (1,1)--(0,2) node[above right] {$\bx$};
    \draw[very thick](1,2)--(2,1) node[above right]{$\by$};
    \draw[very thick,blue,dashed] (1,2)--(0,2);
    \draw[very thick,blue,dashed] (1,1)--(2,1);
\end{tikzpicture}
\quad
\begin{tikzpicture}[x=1cm,y=1cm]
    \draw[gray,very thin] (0,1) grid (3,3);
    \draw[gray,very thin] (1,2)--(0,3);
    \draw[gray,very thin] (2,2)--(1,3);
    \draw[gray,very thin] (3,2)--(2,3);
    \draw[gray,very thin] (1,1)--(0,2);
    \draw[very thick] (1,1)--(0,2) node[above right] {$\bx$};
    \draw[very thick](1,2)--(2,2) node[above right]{$\by$};
    \draw[very thick](2,1)--(3,1);
    \draw[very thick,blue,dashed] (2,1)--(1,2);
    \draw[very thick,blue,dashed] (3,1)--(2,2);
\end{tikzpicture}
\\
\vspace{1em}
\hspace{0.95em}
\begin{tikzpicture}[x=1cm,y=1cm]
    \draw[gray,very thin] (1,3)--(0,3)--(0,1)--(3,1)--(3,3)--(2,3);
    \draw[gray,very thin] (0,2)--(1,2)--(1,1);
    \draw[gray,very thin] (2,1)--(2,2)--(3,2);
    \draw[gray,very thin] (2,1)--(0,3);
    \draw[gray,very thin] (3,1)--(1,3);
    \draw[gray,very thin] (3,2)--(2,3);
    \draw[gray,very thin] (1,1)--(0,2);
    \draw[very thick] (1,1)--(0,2) node[above right] {$\bx$};
    \draw[very thick](1,2)--(1,3) node[above right]{$\by$};
    \draw[very thick](2,2)--(2,3) node[above right]{$\bz$};
    \draw[very thick,blue,dashed] (1,3)--(2,3);
    \draw[very thick,blue,dashed] (1,2)--(2,2);
\end{tikzpicture}
\quad
\begin{tikzpicture}[x=1cm,y=1cm]
    \draw[gray,very thin] (0,1) grid (3,3);
    \draw[gray,very thin] (1,2)--(0,3);
    \draw[gray,very thin] (3,1)--(2,2);
    \draw[gray,very thin] (3,2)--(2,3);
    \draw[gray,very thin] (1,1)--(0,2);
    \draw[very thick] (1,1)--(0,2) node[above right] {$\bx$};
    \draw[very thick](1,2)--(1,3) node[above right]{$\by$};
    \draw[very thick](2,2)--(2,1) node[above right]{$\bz$};
    \draw[very thick,blue,dashed] (1,2)--(2,1);
    \draw[very thick,blue,dashed] (1,3)--(2,2);
\end{tikzpicture}
\quad
\begin{tikzpicture}[x=1cm,y=1cm]
    \draw[gray,very thin] (0,2) grid (3,3);
    \draw[gray,very thin] (2,1)--(0,1)--(0,2);
    \draw[gray,very thin] (1,1)--(1,2);
    \draw[gray,very thin] (2,1)--(0,3);
    \draw[gray,very thin] (3,1)--(1,3);
    \draw[gray,very thin] (3,2)--(2,3);
    \draw[gray,very thin] (1,1)--(0,2);
    \draw[very thick] (1,1)--(0,2) node[above right] {$\bx$};
    \draw[very thick](1,2)--(1,3) node[above right]{$\by$};
    \draw[very thick](2,2)--(3,2) node[above right]{$\bz$};
    \draw[very thick](2,1)--(3,1);
    \draw[very thick,blue,dashed] (2,1)--(2,2);
    \draw[very thick,blue,dashed] (3,2)--(3,1);
\end{tikzpicture}
    \caption{Illustration of Case 2 of the proof of Theorem~\ref{th:triangular}.}
    \label{fig:case2}
\end{figure}
\paragraph{Case 2} Assume $\bx=(2x,2)$ (see Figure \ref{fig:case2} for an illustration). Let $\by$ be the other end of the dimer of $(2x+1,2)$. If $\by=(2x,3)$, then switching the 4-cycle $(2x+1,2),(2x,3),(2x,2),(2x+1,1)$ brings us back to Case 1. If $\by=(2x+2,1)$, then we are done by switching the 4-cycle $(2x+1,1),(2x+2,1),(2x+1,2),(2x,2)$. If $\by=(2x+2,2)$, then $(2x+2,1),(2x+3,1)$ has to be a dimer and switching the 4-cycle $(2x+2,1),(2x+3,1),(2x+2,2),(2x+1,2)$ returns us to the previous case. 

We may therefore assume that $\by=(2x+1,3)$ and it suffices to move this dimer to $(2x+1,2),(2x+2,2)$. Let $\bz$ be the other end of the dimer of $(2x+2,2)$. If $\bz\in\{(2x+2,1),(2x+2,3)\}$, this forms a 4-cycle with $(2x+1,2),(2x+1,3)$ and we are done. It therefore remains that $\bz=(2x+3,2)$, which forces $(2x+2,1),(2x+3,1)$ to be a dimer, and switching the 4-cycle formed by these two dimers returns us to the previous case. 

This completes the induction as well as the proof of the first statement in Theorem~\ref{th:triangular}. For the second statement, note that we switch at most 4 alternating cycles in the process of making the dimer at $(2x+1,1)$ horizontal.

\begin{remark}
Let us note that the proof entails that the minimum degree of $\cD_6(\bbT_{m,n})$ is at least linear in the semi-perimeter $m+n$. In contrast, the minimum degree of $\cD_8(\bbT_{m,n})$ can be shown to be of order $mn$ since there is an alternating cycle of length at most 8 within the third neighbourhood of each vertex. We also observe that ``sufficiently regular'' domains such as triangles or hexagons with even number of vertices can be treated along the lines of Theorem~\ref{th:triangular}.
\end{remark}

\section*{Acknowledgements}
This work was supported by the Austrian Science Fund (FWF): P35428-N. We thank Scott Sheffield and Catherine Wolfram for suggesting the topic to us, Marcin Lis for several interesting discussions and the anonymous referee for helpful suggestions on the presentation.

\bibliographystyle{plain}
\bibliography{Bib}

\end{document}